\documentclass[11pt]{amsart}
\usepackage{amsmath, amsfonts, amsthm, amssymb, multicol, mathtools, dsfont, verbatim}
\usepackage{graphicx}
\usepackage{float, hyperref}
\usepackage{scalerel,stackengine, subcaption}
\usepackage[usenames,dvipsnames]{xcolor}
\usepackage{enumitem}
\usepackage{pgfplots}
\usepgfplotslibrary{fillbetween}
\pgfplotsset{width=10cm,compat=1.9}
\usepackage[square,sort,comma,numbers]{natbib}
\allowdisplaybreaks

\usepackage[square,sort,comma,numbers]{natbib}
\usepackage{fancyhdr}

\makeatletter
\def\@setauthors{%
  \begingroup
  \def\thanks{\protect\thanks@warning}%
  \trivlist
  \centering\footnotesize \@topsep30\p@\relax
  \advance\@topsep by -\baselineskip
  \item\relax
  \author@andify\authors
  \def\\{\protect\linebreak}

  \normalsize\lowercase{\authors}%
  
	\ifx\@empty\contribs
  \else
    ,\penalty-3 \space \@setcontribs
    \@closetoccontribs
  \fi
  \endtrivlist
  \endgroup
}
\def\@settitle{\begin{center}
\LARGE\lowercase{\@title}
  \end{center}%
}
\makeatother

\definecolor{lightblue}{HTML}{2B77A4}
\definecolor{darkred}{HTML}{9E0D0D}
\hypersetup{
	colorlinks=true,
	linkcolor=darkred,
	urlcolor=darkred,
	citecolor=lightblue
}
\numberwithin{equation}{section}

\newtheorem{thm}{Theorem}[section]

\newtheorem{cor}[thm]{Corollary}

\newtheorem{prop}[thm]{Proposition}

\renewcommand{\epsilon}{\varepsilon}
\newcommand{\eps}{\varepsilon}

\newcommand{\rd}{\mathbb{R}^d}

\renewcommand{\geq}{\geqslant}
\renewcommand{\leq}{\leqslant}

\newcommand{\ubd}{\overline{\dim}_{\textup{B}}}
\newcommand{\lbd}{\underline{\dim}_{\textup{B}}}
\newcommand{\hd}{\dim_{\textup{H}}}
\newcommand{\pd}{\dim_{\textup{P}}}
\newcommand{\ad}{\dim_{\textup{A}}}

\newcommand{\fd}{\dim_{\mathrm{F}}}

\title{Brascamp--Lieb inequalities for fractal dimensions}

\author{Jonathan M. Fraser\\ \\
 University of St Andrews, Scotland\\
\MakeLowercase{Email: jmf32@st-andrews.ac.uk}}
\thanks{The  author was  financially supported by a  \emph{Leverhulme Trust Research Project Grant} (RPG-2023-281)  and an \emph{EPSRC Open Fellowship} (EP/Z533440/1).}

\begin{document}


\maketitle
\thispagestyle{empty}

\begin{abstract}
We use the Brascamp--Lieb inequality from functional analysis to prove   novel inequalities  for the upper box,  packing, and Assouad  dimensions of fractal sets in terms of the dimensions of certain projections.  Analogous inequalities do not hold for Hausdorff  or lower box dimensions. We apply these fractal Brascamp--Lieb inequalities   to establish new exceptional set estimates for orthogonal projections  and to provide sharp dimension estimates for certain constrained sumsets. We also establish analogous   nonlinear inequalities via the nonlinear Brascamp--Lieb inequality.
\\ \\ 
\emph{Mathematics Subject Classification 2020}:~28A80, 28A78, 26D20, 11B30.
\\
\emph{Key words and phrases}:  Brascamp--Lieb inequality, box dimension, packing dimension, Assouad dimension, orthogonal projections, dimension profiles, exceptional sets, sumsets, nonlinear projections.
\end{abstract}

\vspace{1cm}

\tableofcontents

\newpage

\section{The Brascamp--Lieb inequality}

The Brascamp--Lieb inequality is a fundamental inequality in modern analysis with many varied applications.  In this paper we use it to establish novel inequalities for various fractal dimensions in terms of the dimensions of certain projections.  Surprisingly, these rather fundamental inequalities seem not to have  been formulated before.

We first recall the  Brascamp--Lieb inequality. Let $m,d \geq 1$ be natural numbers and, for each $i=1, \dots, m$, let $d_i \leq d$ be a natural number and $c_i>0$ be a weight such that
\begin{equation} \label{scaling}
\sum_{i=1}^mc_i d_i = d.
\end{equation}  
Further, let $P_i: \rd \to \mathbb{R}^{d_i}$ be an associated collection of linear surjections and  assume that for all subspaces $V$ of $\rd$
\begin{equation} \label{dimension}
\dim V \leq \sum_{i=1}^m c_i \dim \left(P_i(V)\right).
\end{equation} 
Observe that \eqref{dimension} ensures that   $\cap_{i=1}^m\textup{ker}(P_i) = \{0\}$ since otherwise setting $V= \cap_{i=1}^m\textup{ker}(P_i)$ contradicts \eqref{dimension}. The conditions \eqref{scaling} and \eqref{dimension} are often referred to as the \emph{scaling} and \emph{dimension} conditions, respectively.  The  \emph{Brascamp--Lieb inequality}  states that there is a finite constant $C>0$, depending only on the above \emph{datum} $(\{P_i\}_i, \{c_i\}_i)$, such that for all non-negative $f_i \in L^1(\mathbb{R}^{d_i})$ 
\begin{equation} \label{bli}
\int_{\rd} \prod_{i=1}^m f_i\left(P_i(x)\right)^{c_i} \, dx \leq C\  \prod_{i=1}^m\left(\int_{\mathbb{R}^{d_i}}  f_i(y) \, dy\right)^{c_i}.
\end{equation}  
This version is due to Bennett,   Carbery, Christ, and Tao \cite{bcct}.   The Brascamp--Lieb inequality was first formulated  in \cite{bl} and  generalises the Loomis--Whitney inequality \cite{lw},    H\"older's inequality, and the Poincar\'e inequality. It  is an important tool in modern  harmonic analysis and   functional analysis, and is widely used in convex geometry, information theory and PDEs.

We will also be interested in the following nonlinear Brascamp--Lieb inequality due to Bennett, Bez, Buschenhenke, Cowling, and Flock \cite{bbbcf}.  Let $P_i, c_i$ for $i=1, \dots, m$ be as above and let $T_i: \rd \to \mathbb{R}^{d_i}$ be $C^2$ submersions in a neighbourhood of a point $x_0$ such that $J_{x_0}(T_i) = P_i$ for all $i=1, \dots, m$, where $J_{x_0}(T_i)$ is the Jacobian derivative of $T_i$ at $x_0$.  Then, for all $\eps>0$, there exists a neighbourhood $U$ of $x_0$ such that
\begin{equation} \label{nbli}
\int_{U} \prod_{i=1}^m f_i\left(T_i(x)\right)^{c_i} \, dx \leq (1+\eps)C\  \prod_{i=1}^m\left(\int_{\mathbb{R}^{d_i}}  f_i(y) \, dy\right)^{c_i}.
\end{equation}  
Here $C$ is the same constant as in \eqref{bli}, namely, the Brascamp--Lieb constant associated with  the Brascamp--Lieb datum $(\{P_i\}_i, \{c_i\}_i)$.
 
\section{Main results:~Brascamp--Lieb inequalities for fractal dimensions}

\subsection{Upper box dimension}

Write $\mathcal{L}^d$ for the $d$-dimensional Lebesgue measure on $\rd$ and let $X \subseteq \rd$ be a non-empty bounded set. The \emph{upper box dimension} of $X$ is  defined by
\[
\ubd X = \inf \{ s : \text{for all $\eps>0$,  } \, \mathcal{L}^d (X_\eps) \lesssim \eps^{d-s}\}
\]
where $E_\eps$ denotes the open $\eps$-neighbourhood of a set $E$.  The \emph{lower box dimension} is defined similarly, but   the requirement that the measure bound holds for all $\eps>0$ is replaced by a sequence of $\eps>0$ tending to zero, that is, 
\[
\lbd X = \inf \{ s : \text{for a sequence of $\eps\searrow 0$,  } \, \mathcal{L}^d (X_\eps) \lesssim \eps^{d-s}\}.
\] 
See \cite{falconer} for more background on the box dimensions, their basic properties, and  alternative formulations.  Here and throughout, $A \lesssim B$ is used to mean that $A \leq cB$ for some constant $c \geq 1$ depending only on fixed parameters.  In particular, $c$ may depend on $X$, the Brascamp--Lieb datum, the ambient spatial dimension $d$, but cannot depend on variable quantities such as $\eps$.

The following is a simple consequence of the Brascamp--Lieb inequality.  It appears to be a fundamental inequality for upper box dimension, which perhaps surprisingly  does not appear in the literature to the best of our knowledge.  That said, one can find antecedents to this inequality in work of J\"{a}rvanp\"{a}\"{a} \cite{jar} (see Corollary \ref{exceptional} later and the discussion beforehand) and even in the original formulation of the Loomis--Whitney inequality  \cite{lw}, which involved volumes and box counts.  Moreover, the well-known product formulae for box dimension are a special case and have been known (and of continued relevance and utility) for many years, see \cite{falconer}.

\begin{thm} \label{main}
Suppose  $P_i, c_i$  ($i = 1, \dots, m$) satisfy \eqref{scaling} and \eqref{dimension} and let $X \subseteq \rd$ be a non-empty bounded set. Then
\[
\ubd X \leq \sum_{i=1}^m c_i\, \ubd P_i(X).
\]
\end{thm}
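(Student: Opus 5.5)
The plan is to apply the Brascamp--Lieb inequality \eqref{bli} with each $f_i$ the indicator function of the $\eps$-neighbourhood of the projection. Fix $s_i > \ubd P_i(X)$ for each $i$. By the definition of upper box dimension (in $\mathbb{R}^{d_i}$), for all small $\eps>0$,
\[
\mathcal{L}^{d_i}\big((P_i X)_{\eps}\big) \lesssim \eps^{d_i - s_i}.
\]
Set $f_i = \mathbf{1}_{(P_i X)_{\eps}}$, which lies in $L^1(\mathbb{R}^{d_i})$ since $X$ is bounded.

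The key geometric step is a pointwise lower bound for the left-hand side of \eqref{bli} on $X_{\delta}$, for a suitable $\delta$ comparable to $\eps$. Each $P_i$ is linear, hence Lipschitz with some constant $L_i$. Let $L = \max_i \max(L_i,1)$. If $x \in X_{\eps/L}$, pick $y \in X$ with $|x-y|<\eps/L$; then $|P_i x - P_i y| < \eps$, so $P_i x \in (P_i X)_\eps$ for every $i$. Hence $\prod_i f_i(P_i x)^{c_i} = 1$ on $X_{\eps/L}$. Then \eqref{bli} gives
\[
\mathcal{L}^d(X_{\eps/L}) \leq \int_{\rd}\prod_{i=1}^m f_i(P_i x)^{c_i}\,dx \leq C \prod_{i=1}^m \mathcal{L}^{d_i}\big((P_iX)_\eps\big)^{c_i} \lesssim \prod_{i=1}^m \eps^{c_i(d_i - s_i)} = \eps^{\,d - \sum_i c_i s_i},
\]
where the last equality uses the scaling condition \eqref{scaling}. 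Replacing $\eps$ by $L\eps$ only changes constants, so $\mathcal{L}^d(X_\eps) \lesssim \eps^{d - \sum_i c_i s_i}$ for all small $\eps$.

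The bound for large $\eps$ is trivial by boundedness, since the definition only cares about the exponent as $\eps \to 0$ up to constants. Therefore $\ubd X \leq \sum_i c_i s_i$. Letting $s_i \searrow \ubd P_i(X)$ finishes the argument.

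I expect no serious obstacle; the substance is entirely in \eqref{bli}, with \eqref{dimension} needed only to guarantee that $C$ is finite. The only points needing care are:
\begin{itemize}
\item the neighbourhood bookkeeping, i.e.\ the Lipschitz rescaling;
\item that the same $\eps$ is used in every factor, which is why the argument works for upper box dimension but would fail for lower box dimension, where the good scales for different projections need not coincide.
\end{itemize}
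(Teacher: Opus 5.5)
Your proposal is correct and follows the paper's proof essentially line for line: you take indicator functions of the $\eps$-neighbourhoods of the projections, apply the Brascamp--Lieb inequality \eqref{bli}, use the box-dimension bounds for each $P_i(X)$, and collapse the exponent with the scaling condition \eqref{scaling}. Your Lipschitz rescaling to $X_{\eps/L}$ is slightly more careful than the paper, whose inclusion ``$x \in X_\eps \Rightarrow P_i(x) \in P_i(X)_\eps$'' tacitly assumes $\|P_i\| \leq 1$, which need not hold for general linear surjections.
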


\begin{proof}
For each $i = 1, \dots, m$, let $s_i> \ubd P_i(X)$. Let $\eps>0$ and  $f_i$ be the indicator function on $P_i(X)_\eps$ for $i = 1, \dots, m$.  Observe that if $ x \in X_\eps$, then $P_i(x) \in P_i(X)_\eps$ for all   projections $P_i$.  Therefore,    
\begin{align*}
\mathcal{L}^d (X_\eps) &\leq \mathcal{L}^d\left(\left\{ x\  : \ \forall i = 1, \dots, m, \  P_i(x) \in P_i(X)_\eps \right\} \right)\\
&  =  \int_{\rd} \prod_{i=1}^m f_i\left(P_i(x)\right)^{c_i} \, dx  \qquad \text{(noting that $f_i=f_i^{c_i}$ since $f_i(y) \in \{0,1\}$)}\\
&\leq C\  \prod_{i=1}^m\left(\int_{\mathbb{R}^{d_i}}  f_i(y) \, dy\right)^{c_i} \qquad \text{(by   the Brascamp--Lieb inequality \eqref{bli})}\\
&\lesssim  \prod_{i=1}^m\mathcal{L}^{d_i} \left( P_i(X)_\eps  \right)^{c_i}\\
&\lesssim  \prod_{i=1}^m \eps^{c_i(d_i-s_i)} \qquad \text{(since   $s_i> \ubd P_i(X)$ for all $i$)}\\
&=  \eps^{\sum_{i=1}^m c_i(d_i-s_i)}\\
&=  \eps^{d - \sum_{i=1}^m c_is_i}  \qquad \text{(by the scaling property \eqref{scaling})}.
\end{align*}
This proves that 
\[
\ubd X \leq \sum_{i=1}^m c_is_i
\]
and letting $s_i \searrow \ubd P_i(X)$ completes the proof.
\end{proof}

The only assumptions on the projections $P_i$ in Theorem \ref{main} are the scaling and dimension assumptions \eqref{scaling} and \eqref{dimension}.  If the conclusion of Theorem \ref{main} is to hold for all bounded sets $X$, then  these assumptions  are clearly necessary and it is perhaps remarkable that they are also sufficient. Indeed, if \eqref{dimension} fails for some $V$,  then a ball in $V$ is a counter-example.  On the other hand, if \eqref{scaling} fails in the sense that
\[
\sum_{i=1}^m c_i d_i <d
\]
then $[0,1]^d$ is a counter-example. 

The special case when $\sum_{i=1}^m d_i = d$ (in which case we are forced to choose $c_i=1$ for all $i$) follows from  the well-known product formula 
\[
\ubd (E_1 \times \cdots  \times E_m) \leq  \ubd E_1 +  \cdots + \ubd E_m,
\]
possibly following a change of basis, see \cite[Chapter 7]{falconer}.  The dimension theory of products is fundamental in fractal geometry and has numerous uses.  As such we expect the more general Theorem \ref{main} to also prove useful.  We give several initial  applications in Section \ref{applications}. 

The following is an immediate corollary, which we believe is new even in this special case.

\begin{cor} \label{simplecor}
Let $X \subseteq \rd$ be a non-empty bounded set. Suppose $P_i$ are the set of orthogonal projections  onto the principal  coordinate  $k$-planes.  Then
\[
\ubd X \leq    \frac{d}{  \binom{d}{k} k } \sum_{i=1}^{\binom{d}{k}}  \ubd P_i(X).
\]
Two further special cases of interest include:
\begin{enumerate}
\item If   $\ubd P_i(X) \leq \alpha$ for all $i$, then
\[
\ubd X \leq  \alpha d/k .
\]
\item If $k=d-1$, then 
\[
\ubd X \leq  \frac{1}{d-1}    \sum_{i=1}^{d}  \ubd P_i(X).
\]
\end{enumerate}
\end{cor}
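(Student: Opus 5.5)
The plan is to apply Theorem \ref{main} with $m=\binom{d}{k}$, $d_i = k$, and all weights equal to $c_i = c := \frac{d}{\binom{d}{k}k}$. With these choices the scaling condition \eqref{scaling} holds immediately, since $\sum_i c_i d_i = \binom{d}{k}\cdot c\cdot k = d$. The $P_i$ are the orthogonal projections onto the principal coordinate $k$-planes $\mathrm{span}\{e_j : j \in S\}$, one for each $S \subseteq \{1,\dots,d\}$ with $|S|=k$. Each $P_i$ is a linear surjection onto a copy of $\mathbb{R}^k$.

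The main step is to verify the dimension condition \eqref{dimension}: for every subspace $V \subseteq \rd$ of dimension $n$,
\[
n \leq \frac{d}{\binom{d}{k}k}\sum_{|S|=k} \dim P_S(V).
\]
Since $V$ has dimension $n$, some $n$ coordinates form a basis of the dual of $V$. More precisely, choose a basis of $V$ and put it in reduced row echelon form; this yields $n$ pivot coordinates $T$, $|T|=n$, such that $P_T$ restricted to $V$ is injective. Then for any $S$, the map $P_S|_V$ has rank at least $|S\cap T|$, because composing with the projection onto the coordinates in $S\cap T$ gives a map $V\to \mathbb{R}^{S\cap T}$ that is still surjective, being a coordinate projection of the bijection $P_T|_V$. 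Hence
\[
\sum_{|S|=k}\dim P_S(V) \geq \sum_{|S|=k}|S\cap T| = \sum_{j\in T}\#\{S: j\in S\} = n\binom{d-1}{k-1} = n\,\frac{k}{d}\binom{d}{k}.
\]
Multiplying by $c$ gives exactly $n$, so \eqref{dimension} holds. The general inequality then follows directly from Theorem \ref{main}.

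For the special cases, (1) is obtained by bounding each term by $\alpha$: the sum is at most $\binom{d}{k}\alpha$, giving $\ubd X \leq \alpha d/k$. For (2), substituting $k=d-1$ gives $\binom{d}{d-1}=d$, so the weight becomes $c = d/(d(d-1)) = 1/(d-1)$. The only nontrivial point is the double-counting rank bound. The expected subtlety is justifying $\operatorname{rank}(P_S|_V)\geq |S\cap T|$, which the pivot-coordinate argument handles.
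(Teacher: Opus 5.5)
Your proof is correct and follows the paper's route: apply Theorem \ref{main} with $m=\binom{d}{k}$, $d_i=k$ and equal weights $c_i = d/(\binom{d}{k}k)$. The paper simply asserts that these weights are admissible. You additionally verify the dimension condition \eqref{dimension}, using the pivot-coordinate bound $\dim P_S(V)\geq |S\cap T|$ and double counting, and that argument is sound.
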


\begin{proof}
 In this special case $m = \binom{d}{k}$, $d_i = k$ for all $i$,  and we may choose
\[
c_i =\frac{d}{ \binom{d}{k} k} = \frac{(k-1)!(d-k)!}{(d-1)!}.
\] 
The result then follows from Theorem \ref{main}.
\end{proof}

For a given collection of  linear surjections $P_i: \rd \to \mathbb{R}^{d_i}$, the associated \emph{Brascamp--Lieb polytope} is the polytope of weights for which the Brascamp--Lieb constant is finite, see \cite{val}, that is, the set of weights for which $(\{P_i\},\{c_i\})$ is a permissible Brascamp--Lieb datum satisfying \eqref{scaling} and \eqref{dimension}.  Apart from in rather restricted cases including the setting of the Loomis--Whitney inequality where one considers the projections onto the coordinate hyperplanes, the Brascamp--Lieb polytope  is a non-trivial polytope when it is non-empty.  One may leverage the complexity of the Brascamp--Lieb polytope to obtain optimal dimension estimates in Theorem \ref{main}.  One can see that the optimal choice of weights will depend heavily on the relative values of the dimensions $\ubd P_i(X)$.

\begin{cor} \label{maininf}
Let $P_i: \rd \to \mathbb{R}^{d_i}$ be a collection of  linear surjections and let $X \subseteq \rd$ be a non-empty bounded set. Then
\[
\ubd X \leq \inf_{\{c_i\}_i} \sum_{i}  c_i\, \ubd P_i(X)
\]
where the infimum is taken over the Brascamp--Lieb polytope associated with $\{P_i\}_i$.  By convention we interpret $\inf_\emptyset = \infty$.  
\end{cor}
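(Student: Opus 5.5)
The plan is to obtain Corollary \ref{maininf} directly from Theorem \ref{main}, applying it once for each admissible weight vector and then taking the infimum.

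First, dispose of the trivial case. If the Brascamp--Lieb polytope associated with $\{P_i\}_i$ is empty, the right-hand side is $\inf_\emptyset = \infty$ by convention, so the inequality holds automatically.

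Now suppose the polytope is non-empty and fix an arbitrary weight vector $\{c_i\}_i$ in it. By the definition recalled before the statement, the datum $(\{P_i\}_i,\{c_i\}_i)$ then satisfies the scaling condition \eqref{scaling} and the dimension condition \eqref{dimension}. The one point to check is the convention on zero weights: the polytope may contain vectors with some $c_i = 0$, whereas Theorem \ref{main} is stated for $c_i>0$. I would handle this by discarding the indices with $c_i=0$. These indices contribute nothing to the sum in \eqref{scaling}, nothing to the right-hand side of \eqref{dimension}, and nothing to $\sum_i c_i \ubd P_i(X)$. The reduced datum therefore still satisfies both conditions and has strictly positive weights. (Alternatively, note that the proof of Theorem \ref{main} goes through verbatim with $f_i^{0}\equiv 1$ and the corresponding factor in \eqref{bli} absent.) Theorem \ref{main} then gives
\[
\ubd X \leq \sum_i c_i \,\ubd P_i(X).
\]

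Since this bound holds for every point of the polytope, taking the infimum over the polytope yields the claim. No genuine obstacle arises. The only step needing care is the bookkeeping for zero weights just described, together with the observation that each $\ubd P_i(X) \leq d_i < \infty$, so that every sum appearing in the argument is finite and well defined.
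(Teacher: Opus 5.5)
Your proposal is correct and matches the paper's reasoning: the paper states this corollary without a separate proof, as an immediate consequence of Theorem \ref{main} applied to each admissible weight vector followed by taking the infimum, with $\inf_\emptyset=\infty$ covering the empty polytope. Your extra bookkeeping for points of the polytope with some $c_i=0$ (discarding those indices, which leaves both \eqref{scaling} and \eqref{dimension} intact) is a valid detail that the paper leaves implicit.
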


A natural question is whether there is a simple direct and `purely fractal' proof of   Theorem \ref{main}, without going via Brascamp--Lieb.  We conclude this section by pointing out a potential difficulty in a naive (but natural) attempt at proving    Corollary \ref{simplecor} in the case where both (1) and (2) hold (this is the simplest case of Theorem \ref{main} apart from the product formula case---which does admit a simple `fractal' proof).  

 Consider the $(d-1)$-fold self-product   $X^{d-1} \subseteq \mathbb{R}^{d(d-1)}$ and the product $P_1(X) \times  \cdots \times P_d(X) \subseteq \mathbb{R}^{d(d-1)}$.  Then 
\[
\ubd X^{d-1} = (d-1) \ubd X
\]
and 
\[
\ubd  \left( P_1(X) \times  \cdots \times P_d(X) \right)  \leq d \alpha.
\]
Since  $X^{d-1}$ and   $P_1(X) \times  \cdots \times P_d(X)$ live in the same ambient space, that is $\mathbb{R}^{d(d-1)}$,  it is tempting to believe  (after possibly relabelling the coordinates) that 
\[
X^{d-1} \subseteq P_1(X) \times  \cdots \times P_d(X)
\]
in which case the result would follow upon dividing through by $(d-1)$. But this is not true.  For example, consider $X=\{(0,0,0), (1,1,1) \} \subseteq \mathbb{R}^3$.  Then $(0,0,0,1,1,1) \in X^2$ but    $(0,0,0,1,1,1) \notin P_1(X) \times P_2(X) \times P_3(X)$ because each $P_i(X) = \{(0,0),(1,1)\}$ and so all points in $P_1(X) \times P_2(X) \times P_3(X)$ have a different number of zeros  and  ones.

\subsection{Packing  dimension}

The \emph{packing dimension} of a non-empty set $X \subseteq \rd$ is defined by
\[
\pd X = \inf\{ s : X \subseteq \cup_{j \in \mathbb{N}} X_j \text{ with } \ubd X_j \leq s \text { for all $j$}\}.
\]
As such, the packing dimension may be thought of as a countably stable version of the upper box dimension.  It acts as a natural dual to the Hausdorff dimension and is a central concept in fractal geometry. See \cite[Chapter 3]{falconer} for more background on the packing dimension.  Next we show how to establish the Brascamp--Lieb inequality for packing dimension directly from the upper box dimension inequality. As such we also have the analogues of Corollaries \ref{simplecor} and \ref{maininf} but we omit the statements, noting only that the packing dimension results do not require $X$ to be bounded.

\begin{cor} \label{packingcor}
Suppose  $P_i, c_i$  ($i = 1, \dots, m$) satisfy \eqref{scaling} and \eqref{dimension} and let $X \subseteq \rd$ be a non-empty  set. Then
\[
\pd X \leq \sum_{i=1}^m c_i\, \pd P_i(X).
\]
\end{cor}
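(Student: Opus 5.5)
The plan is to deduce Corollary \ref{packingcor} from Theorem \ref{main}, using the countable stability built into the definition of packing dimension. We use a common refinement of the countable covers of the projections and pull it back to a countable cover of $X$.

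Fix $t_i > \pd P_i(X)$ for each $i = 1, \dots, m$. These are finite since $\pd P_i(X) \leq d_i$. By the definition of packing dimension, for each $i$ there is a countable cover
\[
P_i(X) \subseteq \bigcup_{j \in \mathbb{N}} Y_{i,j}
\]
with $\ubd Y_{i,j} \leq t_i$ for all $j$. Here the $Y_{i,j}$ are bounded, as implicit in the use of upper box dimension. For each multi-index $\mathbf{j} = (j_1, \dots, j_m) \in \mathbb{N}^m$ and each integer cube $Q = k + [0,1]^d$ with $k \in \mathbb{Z}^d$, define
\[
X_{\mathbf{j},Q} = X \cap Q \cap \bigcap_{i=1}^m P_i^{-1}(Y_{i,j_i}).
\]
This is a countable family of bounded sets. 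It covers $X$: given $x \in X$, for each $i$ choose $j_i$ with $P_i(x) \in Y_{i,j_i}$, and choose $Q$ containing $x$.

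For every non-empty $X_{\mathbf{j},Q}$, Theorem \ref{main} applies because $X_{\mathbf{j},Q}$ is bounded and the datum satisfies \eqref{scaling} and \eqref{dimension}. Since $P_i(X_{\mathbf{j},Q}) \subseteq Y_{i,j_i}$, monotonicity of upper box dimension gives
\[
\ubd X_{\mathbf{j},Q} \leq \sum_{i=1}^m c_i \, \ubd P_i(X_{\mathbf{j},Q}) \leq \sum_{i=1}^m c_i \, \ubd Y_{i,j_i} \leq \sum_{i=1}^m c_i t_i .
\]
By the definition of packing dimension this yields $\pd X \leq \sum_i c_i t_i$. Letting $t_i \searrow \pd P_i(X)$ completes the argument.

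There is no serious obstacle; the only points needing care are bookkeeping. The refinement must remain countable, which holds because $\mathbb{N}^m \times \mathbb{Z}^d$ is countable. The pieces must be bounded so that Theorem \ref{main} applies, which is why we intersect with the integer cubes $Q$; this is also why $X$ itself need not be bounded. Finally, empty pieces are simply discarded.
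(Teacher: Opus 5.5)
Your proof is correct. It shares the paper's core idea: cover $X$ by pullbacks $\bigcap_i P_i^{-1}(Y_{i,\cdot})$ of countable covers of the projections, apply Theorem~\ref{main} to each piece, and use monotonicity of $\ubd$. The bookkeeping differs in two ways.

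\begin{enumerate}
\item \textbf{Combining the covers.} The paper first replaces each $Y_{i,j}$ by $\bigcup_{j'\le j}Y_{i,j'}$, using finite stability of $\ubd$, so that the covers are nested in $j$. It then takes the single diagonal index $X_j=\bigcap_i P_i^{-1}(Y_{i,j})$. Nestedness is exactly what lets it pass from $\bigcap_i\bigcup_j$ to $\bigcup_j\bigcap_i$. You instead use the identity $\bigcap_i\bigcup_j A_{i,j}=\bigcup_{\mathbf j\in\mathbb N^m}\bigcap_i A_{i,j_i}$ together with countability of $\mathbb N^m$. This needs neither nesting nor finite stability, so your route is slightly more elementary. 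The paper's version buys only a single-index sequence of pieces.

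\item \textbf{Boundedness of the pieces.} The paper applies Theorem~\ref{main} to $X_j$ without commenting on boundedness. It does hold: \eqref{dimension} forces $\bigcap_i\ker P_i=\{0\}$, so $x\mapsto(P_1x,\dots,P_mx)$ is injective, and an intersection of preimages of bounded sets is then bounded. Your intersection with the unit cubes $Q$ secures boundedness directly, without this observation, so it is harmless but not strictly needed. You also explicitly note that the $Y_{i,j}$ must be bounded because $\ubd Y_{i,j}<\infty$, and you discard empty pieces; the paper leaves both points implicit.
\end{enumerate}
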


\begin{proof}
Let $\eps>0$.  For each $i=1, \dots, m$, let
\[
P_i(X) = \bigcup_{j\in \mathbb{N}} Y_{i,j}
\]
where for all $j \in \mathbb{N}$, 
\begin{equation} \label{dinc}
\ubd Y_{i,j} \leq \pd P_i(X) + \eps.
\end{equation}
 Such a decomposition exists directly by the definition of the packing dimension of $P_i(X)$. Further, assume that the sets $Y_{i,j}$ are increasing in $j$, that is, for all $i=1, \dots, m$
\[
Y_{i,1} \subseteq Y_{i,2} \subseteq \cdots.
\]
This additional constraint can be satisfied since the upper box dimension is finitely stable and so we can replace each $Y_{i,j}$ with the set
\[
\bigcup_{j' \leq j} Y_{i,j'}
\]
and the dimension bound still holds.  For each  $j \in \mathbb{N}$, let 
\[
X_{j} := \bigcap_{i=1}^m P_i^{-1}(Y_{i,j}) = \{x \in \rd :   \forall i = 1, \dots, m, \  P_i(x) \in Y_{i,j}\}.
\]
Then, by construction,
\begin{equation} \label{yinc}
P_i(X_{j}) \subseteq Y_{i,j}
\end{equation}
for all pairs $(i,j)$.  Further, observe that 
\begin{equation} \label{xinc}
X \subseteq  \bigcup_{j\in \mathbb{N}} X_{j}.
\end{equation}
To justify this claim, let $x \in X$.  Then, for each $i=1, \dots, m$,
\[
P_i(x) \in P_i(X) = \bigcup_{j\in \mathbb{N}}  Y_{i,j}
\]
and so $P_i(x) \in  Y_{i,j}$ for some $j$.  In particular, $x \in P_i^{-1}(Y_{i,j})$ for some $j$ and so
\[
X \subseteq  \bigcap_{i=1}^m \bigcup_{j\in \mathbb{N}} P_i^{-1}(Y_{i,j}) \subseteq \bigcup_{j\in \mathbb{N}} \bigcap_{i=1}^m  P_i^{-1}(Y_{i,j}) =  \bigcup_{j\in \mathbb{N}} X_{j}
\] 
which proves \eqref{xinc}.  In the above we crucially used that the sets $Y_{i,j}$ (and so also $P_i^{-1}(Y_{i,j}) $) are increasing in $j$ which allows us to exchange intersection and union.

Then,  applying  Theorem \ref{main} to $X_{j}$ gives
\begin{align*}
\ubd X_{j} &\leq \sum_{i=1}^m c_i\, \ubd P_i(X_{j}) \qquad \text{(by  Theorem \ref{main})} \\
&\leq \sum_{i=1}^m c_i\, \ubd Y_{i,j}  \qquad \text{(by \eqref{yinc})} \\
&\leq   \sum_{i=1}^m c_i\, \pd P_i(X) +   \eps d \qquad \text{(by \eqref{dinc} and the scaling property \eqref{scaling}).} \\
\end{align*}
 Therefore,  letting $\eps \to 0$ and recalling the decomposition   \eqref{xinc},
\[
\pd X \leq \sum_{i=1}^m c_i\, \pd P_i(X),
\]
as required.
\end{proof}

\subsection{Assouad  dimension}

Another important notion of fractal dimension is the Assouad dimension.  It is sensitive to local information and often displays counter-intuitive properties.  We briefly recall the definition and refer the reader to \cite{fraser} for more details.  Given a non-empty set $X \subseteq \rd$, the \emph{Assouad dimension} of $X$ is defined by
\[
\ad X = \inf\left\{ s : \exists c>0, \, \forall x \in X, \, \forall R>r>0,  \ N_r\left(B(x,R) \cap X\right)  \leq c  \left(   \frac{R}{r}   \right)^{s}  \right\}
\]
where, for a bounded set $F \subseteq \mathbb{R}^d$, $N_r(F)$ denotes the smallest number of sets of diameter $r$ required to cover $F$. In particular, the constant $c$  in the definition of Assouad dimension may depend on $d$, $s$,  and $X$ but not on $x\in X$ or on the scales $R>r>0$. 

 The Assouad dimension also turns out to satisfy the Brascamp--Lieb inequality. In fact, the same proof also shows that the \emph{Assouad spectrum} satisfies the Brascamp--Lieb inequality, but we leave the details to the reader.  See \cite{fraser} for the definition of the Assouad spectrum.

\begin{thm} \label{assouad}
Suppose  $P_i, c_i$  ($i = 1, \dots, m$) satisfy \eqref{scaling} and \eqref{dimension} and let $X \subseteq \rd$ be a non-empty  set. Then
\[
\ad X \leq \sum_{i=1}^m c_i\, \ad P_i(X).
\]
\end{thm}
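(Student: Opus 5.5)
We localise the volume argument from the proof of Theorem \ref{main} to balls. Fix $s_i > \ad P_i(X)$, with constants $c_i'$ such that
\[
N_r\bigl(B(y,R)\cap P_i(X)\bigr) \leq c_i' (R/r)^{s_i}
\]
for all $y\in P_i(X)$ and $R>r>0$. Fix $x\in X$ and $R>r>0$; we may assume $r<R$, since otherwise the bound is trivial. The aim is to show
\[
N_r\bigl(B(x,R)\cap X\bigr) \lesssim (R/r)^{\sum_i c_i s_i},
\]
with the implicit constant independent of $x,R,r$.

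\textbf{Step 1: pass to volumes.} Put $E = B(x,R)\cap X$. Covering numbers are comparable to volumes of neighbourhoods, uniformly in the set:
\[
N_r(E) \lesssim r^{-d}\,\mathcal{L}^d(E_r).
\]
This follows by taking a maximal $r$-separated subset of $E$; the disjoint $r/2$-balls around its points lie in $E_r$. Symmetrically, in $\mathbb{R}^{d_i}$,
\[
\mathcal{L}^{d_i}(F_r) \lesssim r^{d_i} N_r(F).
\]

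\textbf{Step 2: apply Brascamp--Lieb.} If $z\in E_r$, then $P_i(z) \in P_i(E)_{\|P_i\| r}$. The operator norms $\|P_i\|$ are fixed constants, so by rescaling (or absorbing them into constants) we may take them to be $1$. Let $f_i$ be the indicator of $P_i(E)_r$. Exactly as in the proof of Theorem \ref{main}, the Brascamp--Lieb inequality \eqref{bli} gives
\[
\mathcal{L}^d(E_r) \leq C\prod_i \mathcal{L}^{d_i}\bigl(P_i(E)_r\bigr)^{c_i}.
\]

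\textbf{Step 3: localise the projections.} The set $P_i(E)$ is contained in $B(P_i(x), \|P_i\| R)\cap P_i(X)$, and $P_i(x)\in P_i(X)$. The Assouad hypothesis therefore gives
\[
N_r\bigl(P_i(E)\bigr) \lesssim (R/r)^{s_i},
\]
since the factor $\|P_i\|^{s_i}$ is a harmless constant. Combining with Step 1,
\[
\mathcal{L}^{d_i}\bigl(P_i(E)_r\bigr) \lesssim r^{d_i}(R/r)^{s_i}.
\]

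\textbf{Step 4: combine.} Putting Steps 1--3 together,
\[
N_r(E) \lesssim r^{-d}\prod_i \bigl(r^{d_i}(R/r)^{s_i}\bigr)^{c_i} = r^{-d+\sum_i c_i d_i}\,(R/r)^{\sum_i c_i s_i}.
\]
By the scaling condition \eqref{scaling}, $-d+\sum_i c_i d_i = 0$, so $N_r(E)\lesssim (R/r)^{\sum_i c_i s_i}$. Letting $s_i \searrow \ad P_i(X)$ gives the theorem.

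The main point needing care is uniformity. Every implied constant must be independent of $x$, $R$ and $r$. They depend only on the Brascamp--Lieb constant, the $c_i'$, $d$, and $\|P_i\|$. The covering/volume comparisons in Steps 1 and 3 are dimensional constants valid for all bounded sets and all scales. Scale invariance enters only through the scaling condition, which is what cancels the powers of $r$.
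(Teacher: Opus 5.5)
Your proof is correct and follows the paper's argument essentially step for step. Both convert $N_r(B(x,R)\cap X)$ into $r^{-d}$ times the volume of its $r$-neighbourhood with constants uniform in the set, apply \eqref{bli} to the indicators of the $r$-neighbourhoods of $P_i(B(x,R)\cap X)$, bound those projected pieces by the Assouad hypothesis at $P_i(x)$, and cancel the powers of $r$ using \eqref{scaling}. Your explicit handling of the operator norms $\|P_i\|$ (by rescaling or absorbing them into constants) is a small extra bit of care that the paper leaves implicit, since it tacitly treats the $P_i$ as $1$-Lipschitz.
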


\begin{proof}
For each $i = 1, \dots, m$, let $s_i> \ad P_i(X)$. Let $R>r>0$, $x \in X$  and  $f_i$ be the indicator function on $\big(P_i(B(x,R) \cap X)\big)_r$ for $i = 1, \dots, m$.  Observe that if $ z \in \big(B(x,R) \cap X\big)_r$, then $P_i(z) \in \big(P_i(B(x,R) \cap X)\big)_r$ for all   projections $P_i$.  Moreover, observe that for all non-empty bounded sets $F$ in $\mathbb{R}^k$
\begin{equation} \label{convert}
N_r(F) \approx_k r^{-k} \mathcal{L}^{k}(F_r)
\end{equation}
with the implicit constants depending on $k$ but \emph{not} on $F$. We apply this below with $F$ depending on $R$ and   it is essential  that the implicit constants do not depend on $R$.  To justify \eqref{convert} first note that an $r$-cover of $F$ can be upgraded to a cover of $F_r$ by replacing each covering set by an appropriate  ball of  diameter $2r$ and so 
\[
\mathcal{L}^{k}(F_r) \lesssim_k r^{k} N_r(F).
\]
On the other hand, suppose there is a maximal collection of disjoint closed balls of diameter  $r$ centred in $F$ of cardinality  $M$.  Replacing each of these balls by a ball with the same centre but twice the diameter yields a $2r$-cover of $F$.  But then
\[
\mathcal{L}^{k}(F_r) \gtrsim_k r^{k} M \geq r^k  N_{2r}(F) \gtrsim_k r^k N_r(F),
\] 
as required. Therefore,    
\begin{align*}
N_r\left(B(x,R) \cap X\right) &\approx r^{-d} \, \mathcal{L}^d \left( \big(B(x,R) \cap X\big)_r \right)   \qquad \text{(by \eqref{convert})} \\
&\leq r^{-d} \,  \mathcal{L}^d\left(\left\{ z\  : \ \forall i = 1, \dots, m, \  P_i(z) \in \big(P_i(B(x,R) \cap X)\big)_r \right\} \right)\\
&  = r^{-d} \,  \int_{\rd} \prod_{i=1}^m f_i\left(P_i(z) \right)^{c_i} \, dz \qquad \text{(noting that $f_i=f_i^{c_i}$ since $f_i(y) \in \{0,1\}$)}\\
&\leq r^{-d} \,  C\  \prod_{i=1}^m\left(\int_{\mathbb{R}^{d_i}}  f_i(y) \, dy\right)^{c_i} \qquad \text{(by   the Brascamp--Lieb inequality \eqref{bli})}\\
&\lesssim r^{-d} \,  \prod_{i=1}^m\mathcal{L}^{d_i} \left( \big(P_i(B(x,R) \cap X)\big)_r  \right)^{c_i}\\
&\approx  r^{-d} \,  \prod_{i=1}^m  \left( r^{d_i} \, N_r\left(P_i(B(x,R) \cap X)\right)  \right)^{c_i}\qquad \text{(by \eqref{convert})} \\
&=    \prod_{i=1}^m   N_r\left(P_i(B(x,R) \cap X)\right)^{c_i}  \qquad \text{(by the scaling property \eqref{scaling})} \\
&\leq    \prod_{i=1}^m     N_r\left( B(P_i(x),R) \cap P_i(X)\right)^{c_i}   \\
&\lesssim    \prod_{i=1}^m  \left(   \frac{R}{r}   \right)^{s_ic_i}  \qquad \text{(since $s_i > \ad P_i(X)$ for all $i$)} \\
&=    \left(   \frac{R}{r}   \right)^{\sum_is_ic_i}  .
\end{align*}
This proves that 
\[
\ad X \leq \sum_{i=1}^m c_is_i
\]
and letting $s_i \searrow \ad P_i(X)$ completes the proof.
\end{proof}

\subsection{Failure for lower box dimension and Hausdorff  dimension}

The Brascamp--Lieb inequality does \emph{not} hold for lower box dimension or Hausdorff dimension.  Indeed, this is immediately seen to be true by the well-known examples satisfying
\[
\hd A \times B > \hd A + \hd B
\]
with the best general upper bound in such situations being
\[
\hd A \times B \leq  \hd A + \pd B,
\]
see \cite[Chapter 7]{falconer}. Analogous examples exist for the lower box dimension. The basic idea behind these well-known examples is that $A$ and $B$ are constructed such that they alternate between looking large on some scales (say, 1-dimensional)  and small on others (say, 0-dimensional)   but that at any given scale at least one of $A$ or $B$ looks large (1-dimensional).  This essentially ensures that $A$ and $B$ are both small, that is, they have Hausdorff or lower box dimension 0 but their product is large, that is, has Hausdorff or lower box dimension 1.  This construction can be generalised to the Brascamp--Lieb setting as follows. We leave the precise details to the reader. We construct compact sets $A_i \subseteq [0,1]$ for $i = 1, \dots, d$ with the property that each set $A_i$ looks 1-dimensional on a sparse sequence of scales tending to zero but otherwise looks 0-dimensional.  Moreover, we interweave the relevant scales such that  at any given scale precisely  one of the $A_i$ looks 1-dimensional.  Let $X=A_1 \times \cdots \times A_d$.  It can be arranged such that 
\[
\hd A_i = 0 \qquad \text{(for all $i=1, \dots, d$),}
\]
\[
\hd X = 1
\] 
and
\[
\hd P(X) = 0
\]
for all principal coordinate projections $P$ onto the $d$ many $(d-1)$-dimensional coordinate  subspaces.  Indeed, for each such projection, one of the sets $A_i$ is suppressed and so the scales at which that particular $A_i$ was the only large factor will witness that the projection is 0-dimensional.  In particular, there can be no Brascamp--Lieb inequality.

By inspecting the proof of Theorem \ref{main} we get the following.
\begin{prop}  
Let $X \subseteq \rd$ be a non-empty bounded set. Then
\[
\lbd X \leq c_j\, \lbd P_j(X) + \sum_{i=1: i \neq j}^m c_i\, \ubd P_i(X)
\]
for all $j =1, \dots, m$.
\end{prop}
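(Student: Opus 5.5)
The plan is to rerun the chain of estimates from the proof of Theorem \ref{main}, now along a sequence of scales chosen to witness the lower box dimension of $P_j(X)$. As there, we assume $P_i, c_i$ satisfy \eqref{scaling} and \eqref{dimension}. Fix $j$, let $t_j > \lbd P_j(X)$, and for $i \neq j$ let $s_i > \ubd P_i(X)$. By the definition of lower box dimension there is a sequence $\eps_n \searrow 0$ with
\[
\mathcal{L}^{d_j}\left(P_j(X)_{\eps_n}\right) \lesssim \eps_n^{d_j - t_j}.
\]
By the definition of upper box dimension, for every $\eps>0$, and in particular for every $\eps_n$, we have $\mathcal{L}^{d_i}(P_i(X)_{\eps}) \lesssim \eps^{d_i - s_i}$ for $i \neq j$.

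Next, for each $n$, take $f_i$ to be the indicator of $P_i(X)_{\eps_n}$. As in the proof of Theorem \ref{main}, the inclusion $X_{\eps_n} \subseteq \bigcap_i P_i^{-1}(P_i(X)_{\eps_n})$ together with the Brascamp--Lieb inequality \eqref{bli} gives
\[
\mathcal{L}^d(X_{\eps_n}) \leq C \prod_{i=1}^m \mathcal{L}^{d_i}\left(P_i(X)_{\eps_n}\right)^{c_i} \lesssim \eps_n^{c_j(d_j - t_j) + \sum_{i\neq j} c_i(d_i - s_i)} = \eps_n^{d - c_j t_j - \sum_{i \neq j} c_i s_i},
\]
where the last equality uses the scaling condition \eqref{scaling}. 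Since this bound holds along the sequence $\eps_n \searrow 0$, the definition of lower box dimension gives $\lbd X \leq c_j t_j + \sum_{i\neq j} c_i s_i$. Letting $t_j \searrow \lbd P_j(X)$ and $s_i \searrow \ubd P_i(X)$ then completes the argument.

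The only real point to check is the obstacle that explains why just one lower box dimension can appear. The sequence of good scales for $P_j(X)$ is arbitrary, so the bounds for the other projections must hold at those same scales. This is exactly what the upper box dimension guarantees, since it controls all $\eps$. With two lower box dimensions the good sequences could be disjoint, which is the phenomenon behind the counterexamples described above. The implicit constants in the bounds for $i\neq j$ are uniform in $\eps$ by definition, so nothing further is needed.
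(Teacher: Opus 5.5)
Your proposal is correct and matches the paper's proof: the paper also reruns the argument of Theorem \ref{main} along a sequence of scales witnessing $\lbd P_j(X)$, and uses the all-scales upper box bounds for the projections $P_i$ with $i \neq j$. You have simply written out the details the paper leaves implicit, including the tacit assumption that $P_i, c_i$ satisfy \eqref{scaling} and \eqref{dimension}.
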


\begin{proof}
The proof is the same as the proof of Theorem \ref{main} apart from that we choose a sequence of $\eps \to 0$ which picks out the lower box dimension of $P_j(X)$ and we are forced to make do with the upper box dimension estimates for $i \neq j$.
\end{proof}

One may also recover a Brascamp--Lieb inequality for lower box dimension for `self-products', see Corollary \ref{self-prod}.

\section{Main results:~nonlinear Brascamp--Lieb inequalities for fractal dimensions}

In this section we obtain   nonlinear Brascamp--Lieb inequalities for the upper box, packing and Assouad  dimensions.  The main difficulty of using the nonlinear Brascamp--Lieb inequality \eqref{nbli} in the proof of Theorem \ref{main} is that it is local, that is, it only holds for the neighbourhood $U$ of $x_0$ and we cannot ensure that $X \subseteq U$.  The solution to this is to take finite covers of $X$ by domains $U$ and then assume uniformity of the Brascamp--Lieb datum over $X$.  More general formulations are possible, for example, the uniformity does not need to be over the whole of $X$ but we only need $X$ to be covered by a finite collection of neighbourhoods of points which enjoy such uniformity, but we leave such extensions to the reader.

\begin{thm} \label{mainnl}
Let $m,d \geq 1$ be natural numbers and, for each $i=1, \dots, m$, let $d_i \leq d$ be a natural number and $c_i>0$ be a weight such that
\begin{equation} \label{scaling2}
\sum_{i=1}^mc_i d_i = d.
\end{equation}  
Let $X \subseteq \rd$ be a non-empty compact set. Further, for $i=1, \dots, m$ let $T_i: \rd \to \mathbb{R}^{d_i}$ be $C^2$ submersions in a common neighbourhood of $X$.  For all $x_0 \in X$,  write $J_{x_0}(T_i) = P_i^{x_0}$  where each $P_i^{x_0}: \rd \to \mathbb{R}^{d_i}$ is a  linear surjection.  Assume that, for all $x_0 \in X$ and all subspaces $V$ of $\rd$,
\begin{equation} \label{dimension2}
\dim V \leq \sum_{i=1}^m c_i \dim \left(P_i^{x_0}(V)\right).
\end{equation}  
 Then
\[
\ubd X \leq \sum_{i=1}^m c_i\, \ubd T_i(X),
\]
 \[
\pd X \leq \sum_{i=1}^m c_i\, \pd T_i(X),
\]
and 
\[
\ad X \leq \sum_{i=1}^m c_i\, \ad T_i(X).
\]
\end{thm}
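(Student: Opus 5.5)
The plan is to reduce everything to a local version of the proof of Theorem \ref{main}, using \eqref{nbli} in place of \eqref{bli}, and then patch the local estimates together by compactness.

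\textbf{Step 1: finite cover.} For each $x_0 \in X$, the datum $(\{P_i^{x_0}\}_i,\{c_i\}_i)$ satisfies \eqref{scaling2} and \eqref{dimension2}, so its Brascamp--Lieb constant $C_{x_0}$ is finite. Apply \eqref{nbli} with, say, $\eps=1$. This gives an open neighbourhood $U_{x_0}$ of $x_0$, which we may take to be a ball inside the common neighbourhood where the $T_i$ are $C^2$ submersions, such that
\[
\int_{U_{x_0}} \prod_i f_i(T_i(x))^{c_i}\,dx \leq 2C_{x_0}\prod_i \Big(\int f_i\Big)^{c_i}.
\]
Shrinking each ball, we may assume $B(x_0,2\rho_{x_0}) \subseteq U_{x_0}$. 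By compactness, $X$ is covered by finitely many balls $B(x_k,\rho_k)$, $k=1,\dots,K$. Put $\rho=\min_k \rho_k$, $C^*=2\max_k C_{x_k}$, and let $L$ be a common Lipschitz constant for the $T_i$ on $\bigcup_k U_{x_k}$.

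\textbf{Step 2: upper box dimension.} Take $s_i>\ubd T_i(X)$ and $0<\eps<\rho$. Set $X^k = X\cap B(x_k,\rho_k)$. Then $X^k_\eps \subseteq U_{x_k}$, and for $x\in X^k_\eps$ we have $T_i(x)\in T_i(X)_{L\eps}$, using the Lipschitz bound along the segment from $x$ to a nearby point of $X^k$; that segment stays in the ball $B(x_k,2\rho_k)$. With $f_i$ the indicator of $T_i(X)_{L\eps}$, the same chain of inequalities as in Theorem \ref{main} gives
\[
\mathcal{L}^d(X^k_\eps)\leq C^*\prod_i \mathcal{L}^{d_i}\big(T_i(X)_{L\eps}\big)^{c_i}\lesssim \eps^{d-\sum_i c_is_i}.
\]
Summing over the $K$ pieces bounds $\mathcal{L}^d(X_\eps)$ by the same quantity, which gives the first inequality.

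\textbf{Step 3: packing dimension.} I would try to mimic Corollary \ref{packingcor}. Write $T_i(X)=\bigcup_j Y_{i,j}$ with the $Y_{i,j}$ increasing in $j$, and let $X_j = X\cap\bigcap_i T_i^{-1}(Y_{i,j})$. Then $X_j$ is bounded, $T_i(X_j)\subseteq Y_{i,j}$, and $X\subseteq\bigcup_j X_j$. The difficulty is that Step 2 assumed compactness. However, Step 2 only used compactness of the ambient $X$ to build the cover, so the same estimate holds for any subset $E\subseteq X$: cover $E$ by the same finite family of balls and run Step 2 with $E$ in place of $X$. This yields $\ubd X_j\leq \sum_i c_i\,\ubd Y_{i,j}$, and we conclude exactly as in Corollary \ref{packingcor}.

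\textbf{Step 4: Assouad dimension.} Follow the proof of Theorem \ref{assouad}. For $R\geq\rho/4$, the ratio $R/r$ is comparable to $1/r$ up to a constant depending on $\rho$ and $\operatorname{diam}X$. Covering $X\cap B(x,R)$ by $X$ itself then reduces this case to Step 2, but using the Assouad bound for the projections at scale $R\approx1$. For $R<\rho/4$, the set $(B(x,R)\cap X)_r$ with $r<R$ lies in a single $U_{x_k}$. Apply \eqref{nbli} with $f_i$ the indicator of $\big(T_i(B(x,R)\cap X)\big)_{Lr}$, and use $T_i(B(x,R)\cap X)\subseteq B(T_i(x),LR)\cap T_i(X)$ together with \eqref{convert}. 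The constants are then uniform in $x$, $R$ and $r$, and the computation concludes as before.

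\textbf{Main obstacle.} I expect the main obstacle to be the bookkeeping of uniformity. One must check that the neighbourhoods given by \eqref{nbli} can be taken uniform via compactness, and that the distortion by the nonlinear $T_i$ only costs a factor $L$ in the neighbourhood radius, so that the constants stay independent of the scales.
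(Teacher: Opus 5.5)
Your proposal is correct and follows essentially the same route as the paper. It applies the nonlinear Brascamp--Lieb inequality with $\eps=1$ on a finite cover of the compact set $X$ and sums the local estimates for upper box dimension, uses the increasing-decomposition argument of Corollary \ref{packingcor} for packing dimension, and uses a localised version of the proof of Theorem \ref{assouad} for Assouad dimension; your bookkeeping is in fact slightly more careful than the paper's, since you decompose $X$ rather than $X_\eps$ (keeping the nearby point and connecting segment inside one ball where the derivative bound applies), observe that the box-dimension bound holds for all subsets of $X$ (which covers the non-compact pieces $X_j$ in the packing step), and treat large $R$ separately in the Assouad case.
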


\begin{proof}
We first consider the upper box dimension inequality. For all $x_0 \in X$, let $U(x_0)$ be a neighbourhood of $x_0$ such that 
 \begin{equation} \label{nbli2}
\int_{U(x_0)} \prod_{i=1}^m f_i\left(T_i(x)\right)^{c_i} \, dx \leq 2C_{x_0}\  \prod_{i=1}^m\left(\int_{\mathbb{R}^{d_i}}  f_i(y) \, dy\right)^{c_i},
\end{equation}  
where $C_{x_0}$ is the Brascamp--Lieb constant associated to the Brascamp--Lieb datum $(\{P_i^{x_0}\}_i, \{c_i\}_i)$ which \emph{a priori} depends on $x_0$.  Nevertheless, each $C_{x_0}$ is a finite constant.  The inequality \eqref{nbli2} follows from the nonlinear Brascamp--Lieb inequality  \eqref{nbli} in the case $\eps=1$.  Since $X$ is compact, we may extract a finite subcover of $X$ from the sets $U({x_0})$ and we denote this finite subcover by $\{U(x_0)\}_{x_0 \in X_0}$ where $X_0$ is a finite subset of $X$.  Let
\begin{equation} \label{c0choice}
C_0 = \max_{x_0 \in X_0} C_{x_0}<\infty.
\end{equation}
Let $\eps>0$ be sufficiently small such that 
\begin{equation} \label{decompy2}
X_{\eps} \subseteq \bigcup_{x_0 \in X_0}  U(x_0)
\end{equation} 
and decompose 
\begin{equation*}
X_\eps = \bigcup_{x_0 \in X_0} X_\eps \cap U(x_0)
\end{equation*}
noting that 
\[
\mathcal{L}^d (X_\eps)  \leq \sum_{x_0 \in X_0} \mathcal{L}^d (X_\eps \cap U(x_0)) .
\]
We now follow the proof of the linear Brascamp--Lieb inequality for upper box dimension (that is, Theorem \ref{main})  for each of the sets $X_\eps \cap U(x_0)$ separately. Fix $x_0 \in X_0$ and, for each $i = 1, \dots, m$, let
\[
s_i> \ubd T_i(X)  \geq \ubd T_i(X \cap U(x_0)).
\]
 Let  $f_i$ be the indicator function on $ T_i(X \cap U(x_0))_{\eps'}$ for $\eps' = \|T_i\| \eps$ for $i = 1, \dots, m$. Here
\[
\|T_i\| = \sup_{\substack{x,y \in U:\\ x \neq y}} \frac{|T_i(x)-T_i(y)|}{|x-y|}
\]
is the Lipschitz norm of $T_i$. Note that each of the submersions $T_i$  is $C^2$ on a neighbourhood of $X$ and so we may assume $\| T_i\| <\infty$ by considering a possibly smaller neighbourhood of $X$ if necessary.  Moreover, we only consider finitely many submersions $T_i$ and so we may safely use the estimate $\| T_i\| \approx 1$. 

Observe that if $ x \in X_\eps \cap U(x_0)$, then $T_i(x) \in T_i(X \cap U(x_0))_{\eps'}$  for all   submersions  $T_i$.  Therefore,    
\begin{align*}
\mathcal{L}^d (X_\eps \cap U(x_0)) &\leq \mathcal{L}^d\left(\left\{ x \in U(x_0)\  : \ \forall i = 1, \dots, m, \  T_i(x) \in T_i(X \cap U(x_0))_{\eps'} \right\} \right)\\
&  =  \int_{U(x_0)} \prod_{i=1}^m f_i\left(T_i(x)\right)^{c_i} \, dx \qquad \text{(noting that $f_i=f_i^{c_i}$ since $f_i(y) \in \{0,1\}$)}\\
&\leq 2C_0\  \prod_{i=1}^m\left(\int_{\mathbb{R}^{d_i}}  f_i(y) \, dy\right)^{c_i} \qquad \text{(by   \eqref{nbli2} and \eqref{c0choice})}\\
&\lesssim  \prod_{i=1}^m\mathcal{L}^{d_i} \left( T_i(X \cap U(x_0))_{\eps'}  \right)^{c_i}\\
&\lesssim  \prod_{i=1}^m (\|T_i\| \eps)^{c_i(d_i-s_i)} \qquad \text{(since   $s_i> \ubd T_i(X \cap U(x_0))$ for all $i$)}\\
&\approx  \eps^{\sum_{i=1}^m c_i(d_i-s_i)} \\
&=  \eps^{d - \sum_{i=1}^m c_is_i}  \qquad \text{(by  \eqref{scaling2})}.
\end{align*}
 This proves that 
 \[
 \mathcal{L}^d (X_\eps)  \leq \sum_{x_0 \in X_0} \mathcal{L}^d (X_\eps \cap U(x_0)) \lesssim \eps^{d - \sum_{i=1}^m c_is_i}
 \]
 and therefore
\[
\ubd X \leq \sum_{i=1}^m c_is_i.
\]
Letting $s_i \searrow \ubd T_i(X) $   completes the proof of the upper box dimension estimate.

The packing dimension inequality follows from the upper box dimension inequality in the same way as Corollary \ref{packingcor} follows from Theorem \ref{main}. Indeed, we did not use linearity in the proof of Corollary \ref{packingcor}.   Finally, the Assouad dimension inequality follows via a similar strategy to the upper box dimension proof above but adopting the localised approach of Theorem \ref{assouad}.   More specifically, consider the same finite cover of $X$ as above and the same $\eps>0$, that is, we begin with \eqref{decompy2}.  Choose $0<r<R<\eps$ and $x \in X$ and consider the sets 
\[
(B(x,R) \cap X)_r \cap U(x_0)
\]
separately and afterwards appeal to
\[
 N_r(B(x,R) \cap X) \approx r^{-d} \mathcal{L}^d((B(x,R) \cap X)_r) \lesssim \sum_{x_0 \in X_0} r^{-d} \mathcal{L}^d((B(x,R) \cap X)_r \cap U(x_0)).
\]
Then,  for each $x_0 \in X_0$, following the proof of Theorem \ref{assouad} and the argument above, we get
\[
r^{-d}\mathcal{L}^d((B(x,R) \cap X)_r \cap U(x_0)) \lesssim \left(\frac{R}{r} \right)^{\sum_i s_i}
\] 
for $s_i>\ad T_i(X)$ chosen arbitrarily. We leave the precise details to the reader. 
\end{proof}



\section{Applications} \label{applications}

We now  give some simple applications of the Brascamp--Lieb inequalities for fractal dimensions to help motivate this line of enquiry. The results in this section also hold (for Borel but not necessarily bounded sets) with upper box  dimension (and upper box  dimension profiles)   replaced with packing dimension (and packing dimension profiles).  Some of the results also apply to Assouad dimension but one has to be slightly careful because Assouad dimension has some different properties, including not being  Lipschitz stable in general.  We only state the results for upper box  dimension to simplify the exposition. 

\subsection{Exceptional sets and dimension profiles}

Here and throughout $G(d,k)$ denotes the Grassmannian manifold consisting of the set of $k$-dimensional subspaces of $\rd$ and $P_V$ denotes orthogonal projection onto $V$. 

One way of interpreting the Brascamp--Lieb formula for fractal dimensions is that there cannot be too many projections with very small dimension. Indeed, in the setting of Cartesian products, it is a well-known folklore result that, given a  compact set $X \subseteq \mathbb{R}^2$, there is at most one $V \in G(2,1)$ such that
\[
\ubd P_{V}(X) < \frac{\ubd X}{2}.
\]
Indeed, if there were two such spaces $V, V'$, then (after a possible change of basis) $X \subseteq P_V(X) \times P_{V'}(X)$ and 
\[
\ubd X \leq \ubd P_V(X) + \ubd P_{V'}(X) < \ubd X,
\]
a contradiction.   The following result generalises this idea and also generalises \cite[Theorem 3.4]{jar} which asserts the existence of a coordinate subspace $V \in G(d,k)$ such that $\ubd P_{V}(X) \geq \frac{k}{d} \, \ubd X$.  We say a collection of linear surjections $P_i: \rd \to \mathbb{R}^{d_i}$  is \emph{BL-feasible} if there exist  associated weights $\{c_i\}$ for which $(\{P_{i}\}, \{c_i\})$ is an admissible Brascamp--Lieb datum satisfying \eqref{scaling} and \eqref{dimension}, that is, if the associated Brascamp--Lieb polytope is non-empty. For example, the collection of principal coordinate projections onto $k$-planes is BL-feasible but BL-feasible collections can be much smaller in general.  The problem of determining when a given collection is BL-feasible is quite complicated in general, see, for example, \cite{val}.
\begin{cor} \label{exceptional}
Let $X \subseteq \rd$ be a non-empty bounded set and let $\{P_i\}$  be a BL-feasible collection  of linear surjections with $P_i: \rd \to \mathbb{R}^{d_i}$.  Then
\[
\ubd P_{i}(X) \geq \frac{d_i}{d} \, \ubd X
\]
for some $i$.
\end{cor}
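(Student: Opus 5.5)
Argue by contradiction, using Theorem \ref{main} together with the scaling condition \eqref{scaling}. Since $\{P_i\}$ is BL-feasible, fix weights $c_i>0$ such that $(\{P_i\},\{c_i\})$ satisfies \eqref{scaling} and \eqref{dimension}. Suppose, for a contradiction, that
\[
\ubd P_i(X) < \frac{d_i}{d}\,\ubd X
\]
for every $i$. Theorem \ref{main} applies directly, since $X$ is non-empty and bounded, and gives
\[
\ubd X \leq \sum_{i=1}^m c_i\,\ubd P_i(X) < \sum_{i=1}^m c_i\,\frac{d_i}{d}\,\ubd X = \frac{\ubd X}{d}\sum_{i=1}^m c_i d_i = \ubd X,
\]
where the final equality is exactly \eqref{scaling}. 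The strict inequality uses $c_i>0$ and that the index set is non-empty ($m\geq 1$). This yields $\ubd X<\ubd X$, a contradiction, so some $i$ must satisfy $\ubd P_i(X)\geq \frac{d_i}{d}\ubd X$.

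The only point needing care is the degenerate case $\ubd X = 0$. Here the hypothesis $\ubd P_i(X) < 0$ is impossible outright, so the conclusion $\ubd P_i(X)\geq 0$ holds trivially for every $i$. The strict inequality in the chain above is therefore only invoked when the assumed strict inequalities are genuinely available, and the argument is consistent in all cases.

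There is no real obstacle: the content is entirely in Theorem \ref{main}. The proof is just the observation that the weighted average $\sum_i c_i \frac{d_i}{d}$ equals $1$ by \eqref{scaling}, so not every term $\ubd P_i(X)$ can fall below its share $\frac{d_i}{d}\ubd X$. If desired, one can also record the sharper statement that
\[
\max_i \frac{d}{d_i}\,\ubd P_i(X) \geq \ubd X,
\]
which is the same argument phrased without contradiction. Taking $\{P_i\}$ to be the principal coordinate projections onto $k$-planes recovers \cite[Theorem 3.4]{jar}.
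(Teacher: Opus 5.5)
Your proof is correct and is essentially the paper's argument: fix admissible weights, assume $\ubd P_i(X) < \frac{d_i}{d}\ubd X$ for all $i$, apply Theorem \ref{main}, and use \eqref{scaling} to reach $\ubd X < \ubd X$. Your remarks on the case $\ubd X = 0$ and on needing $c_i>0$ and $m\geq 1$ are careful extras that the paper leaves implicit.
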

\begin{proof}
Suppose $\{P_i\}$ is   a BL-feasible collection and choose permissible associated weights $\{c_i\}$ arbitrarily. Suppose $\ubd P_{i}(X) < \frac{d_i}{d} \, \ubd X$ for all $i$.  But then Theorem \ref{main} implies
\[
\ubd X \leq   \sum_{i} c_i  \,  \ubd P_{i}(X) <  \frac{\ubd X}{d} \,  \sum_{i} c_i d_i    = \ubd X,
\]
by \eqref{scaling}, and this is a contradiction. 
\end{proof}

Understanding the `typical behaviour' of dimension under orthogonal projection is an important and active area in fractal geometry, see \cite{falconer, fraser, mattila}.  The seminal result is the Marstrand--Mattila projection theorem which says that for a Borel set $X \subseteq \rd$, 
\[
\hd P_V(X) = \min\{\hd X , k\}
\]
 for `almost all' $V \in G(d,k)$ with respect to the Grassmannian measure.  The box and packing dimensions also satisfy a `Marstrand theorem' in the sense that $\ubd P_V(X)$ assumes a constant value for almost all $V \in G(d,k)$, but this constant value is more subtle than in the case of the Hausdorff dimension.  In fact it is given by the $k$-dimensional upper box (or packing or lower box) dimension profile of $X$, denoted by  $\ubd^k X$ and introduced in \cite{FH2}.  There is interest in the exceptional set, that is,
\[
E_{d,k}(s):=\{ V \in G(d,k): \ubd P_V(X) < s\}.
\]
In particular, for $s\leq\ubd^k X$, $E_{d,k}(s)$ is a null set with respect to the Grassmannian measure, but can still be large in the sense of Hausdorff dimension.  The Brascamp--Lieb inequalities can be used to provide new structural information concerning certain exceptional sets.  
\begin{cor}
Let $1 \leq k < d$ be integers and   $X \subseteq \rd$ be a non-empty bounded set.  The exceptional set 
\begin{equation} \label{eset}
E_{d,k}\left(\frac{k}{d} \, \ubd X\right)
\end{equation}
 does not contain a subset for which the associated orthogonal projections are BL-feasible. In particular, the exceptional set \eqref{eset} is not dense in $G(d,k)$.
\end{cor}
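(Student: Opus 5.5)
The first claim follows directly from Corollary \ref{exceptional}. Suppose for contradiction that $E_{d,k}\left(\frac{k}{d}\,\ubd X\right)$ contains a subset $\{V_i\}$ whose orthogonal projections $P_{V_i}$ form a BL-feasible collection. Identifying each $V_i$ with $\mathbb{R}^k$ via an isometry, each $P_{V_i}$ is a linear surjection $\rd \to \mathbb{R}^k$. Isometries preserve upper box dimension, so $\ubd P_{V_i}(X) < \frac{k}{d}\,\ubd X$ for every $i$. But Corollary \ref{exceptional}, applied with $d_i = k$, gives some $i$ with $\ubd P_{V_i}(X) \geq \frac{k}{d}\,\ubd X$, a contradiction. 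One small point: BL-feasibility as defined involves a collection indexed by $i=1,\dots,m$, so it is enough to rule out finite BL-feasible subcollections. I would note that any BL-feasible subset contains a finite BL-feasible subset, or simply read ``subset'' as finite.

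For non-density, I would exhibit an open set $\mathcal{U} \subseteq G(d,k)$ with the following property: any choice of one point from each of finitely many prescribed open pieces of $\mathcal{U}$ gives a BL-feasible collection. If $E$ were dense, it would meet every such piece, and we would get a BL-feasible subset of $E$, contradicting the first part.

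The natural source is the principal coordinate $k$-planes $W_1,\dots,W_{\binom{d}{k}}$, which are BL-feasible by Corollary \ref{simplecor}. The key step is openness of feasibility: if $V_i$ is sufficiently close to $W_i$ for each $i$, then $\{P_{V_i}\}$ is still BL-feasible, with the same weights $c_i = d/(\binom{d}{k}k)$. The scaling condition \eqref{scaling} is automatic since all $d_i = k$. For the dimension condition \eqref{dimension}, I would use lower semicontinuity of rank: for a fixed subspace $V$, $\dim P_{V_i}(V) \geq \dim P_{W_i}(V)$ once $V_i$ is close to $W_i$. The catch is that this must hold uniformly over all $V$, which range over a compact Grassmannian.

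This uniformity is the main obstacle. Lower semicontinuity alone gives, for each $V$, a neighbourhood in which the rank does not drop, and one then needs compactness over $(V_i)$ and $V$ jointly. I would argue as follows. The set of $(V_1,\dots,V_m,V)$ with $\dim P_{V_i}(V) \geq r_i$ is open, and the condition $\dim V \leq \sum_i c_i \dim P_{V_i}(V)$ at $(W_i)$ is a finite union of such rank constraints over each compact $G(d,j)$. A tube-lemma argument then gives a neighbourhood of $(W_i)$ on which the condition holds for all $V$.

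If the tube lemma proves awkward, I would fall back on the simpler route of the known fact that the Brascamp--Lieb polytope is lower semicontinuous, i.e.\ that feasibility is open in the data. It is cleanest to cite \cite{bcct} or \cite{val} for this: the set of data with finite Brascamp--Lieb constant is open, or at least the constant is locally bounded near a feasible datum, as in the nonlinear theory \cite{bbbcf}. With openness in hand, density of $E$ would give $V_i \in E$ close to each $W_i$, producing a BL-feasible subset of $E$ and the desired contradiction.
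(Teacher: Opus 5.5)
Your proposal is correct and follows the paper's proof. The first claim is an immediate contradiction with Corollary \ref{exceptional}, and non-density comes from approximating the coordinate $k$-planes with the weights held fixed, using openness of feasibility; the only difference is that the paper cites this openness, whereas your lower-semicontinuity-of-rank plus compactness (tube lemma) argument over each $G(d,j)$ proves it directly, and that argument is sound.
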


\begin{proof}
If there was such a subset of \eqref{eset}, then this would contradict Corollary \ref{exceptional}.  Further, for a fixed collection of weights $\{c_i\}$, the  set of surjections $\{P_i\}$ such that the associated Brascamp--Lieb constant is finite is open \cite{cts}.  Therefore, if the exceptional set \eqref{eset} was dense in $G(d,k)$, then we could find a BL-feasible subset by approximating a known finite BL-feasible collection, such as the set of $k$-dimensional coordinate projections.  
\end{proof}

Since full measure sets are dense, the previous corollary gives a simple bound for the dimension profiles.    This bound is not new; it was observed in \cite{jar} and improved to the best possible in \cite{FH}, see below.

\begin{cor}
Let $X \subseteq \rd$ be a non-empty bounded set and $1 \leq k < d$. Then
\[
\ubd^k X \geq \frac{k}{d} \, \ubd X.
\]
\end{cor}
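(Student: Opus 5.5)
We deduce the bound from the preceding corollary, which says the exceptional set $E_{d,k}\left(\frac{k}{d}\,\ubd X\right)$ is not dense in $G(d,k)$. We also use the Marstrand-type fact recalled above: there is a full-measure set of $V \in G(d,k)$ with $\ubd P_V(X) = \ubd^k X$.

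Suppose, for a contradiction, that $\ubd^k X < \frac{k}{d}\,\ubd X$. Then every $V$ in the full-measure set satisfies
\[
\ubd P_V(X) = \ubd^k X < \frac{k}{d}\,\ubd X,
\]
so the whole full-measure set lies inside $E_{d,k}\left(\frac{k}{d}\,\ubd X\right)$.

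The Grassmannian measure is the natural rotation-invariant probability measure on $G(d,k)$, and it gives positive mass to every non-empty open set. Hence the complement of a full-measure set contains no non-empty open set, so any full-measure set is dense. It follows that the exceptional set $E_{d,k}\left(\frac{k}{d}\,\ubd X\right)$ is dense in $G(d,k)$, which contradicts the preceding corollary. Therefore $\ubd^k X \geq \frac{k}{d}\,\ubd X$.

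The only step requiring care is to justify that $\ubd P_V(X)$ really equals $\ubd^k X$ for almost every $V$, as established in \cite{FH2}. We take this as given, since the paper recalls it as known.

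We could also avoid density and measure theory altogether and argue directly from Corollary \ref{exceptional}. Because BL-feasibility is an open condition \cite{cts}, small perturbations of the $k$-dimensional coordinate projections remain BL-feasible, and a full-measure set meets every such neighbourhood. We can therefore select a BL-feasible family $\{P_{V_i}\}$ with every $V_i$ generic, meaning $\ubd P_{V_i}(X) = \ubd^k X$. Here each $d_i = k$. Corollary \ref{exceptional} then gives some $i$ with
\[
\ubd^k X = \ubd P_{V_i}(X) \geq \frac{k}{d}\,\ubd X,
\]
which is the claimed bound.
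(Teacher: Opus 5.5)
Your argument is correct and matches the paper's, which deduces the bound in one line from the non-density of $E_{d,k}\left(\frac{k}{d}\,\ubd X\right)$ and the fact that full-measure subsets of $G(d,k)$ are dense; you simply spell this out. Your second, direct route inlines the paper's proof of that non-density corollary (openness of BL-feasibility plus Corollary \ref{exceptional}), so it is the same argument rather than a different one.
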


In general, better bounds for $\ubd^k X $ are known.  In fact
\[
\frac{\ubd X}{1+(1/k-1/d)\ubd X} \leq \ubd^k X  \leq \min\{\ubd X, k\}
\]
and these inequalities are the best possible in general, see \cite{FH,FH2}. It is not surprising that the approach above does not yield sharp bounds for the dimension profile because we are proving something rather stronger.  Rather than giving almost sure lower bounds for the dimension, we are giving bounds which hold outside of a much smaller and more structurally constrained set of exceptions. We believe rather stronger conclusions about the exceptional sets are possible along these lines, but would require careful analysis of the structure  of BL-feasible sets, which in general is tricky.  Thanks to a simple description from \cite[Theorem 1.3]{val} in the rank $d-1$ case we get the following.  

\begin{cor}
Let $d \geq 3$ be an integer and   $X \subseteq \rd$ be a non-empty bounded set.  The exceptional set 
\begin{equation} \label{eset2}
E_{d,d-1}\left(\frac{d-1}{d} \, \ubd X\right)
\end{equation}
is necessarily contained in a co-dimension 1 submanifold of $G(d,d-1)$ and, in particular, has Hausdorff dimension at most $d-2 = \dim G(d,d-1)-1$. 
\end{cor}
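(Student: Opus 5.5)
The plan is to reduce the statement to Corollary \ref{exceptional} by showing that any $d$ hyperplanes in general position already form a BL-feasible collection. I would verify this directly from \eqref{scaling} and \eqref{dimension}; this is the content of the rank $d-1$ description in \cite[Theorem 1.3]{val}. Identify $G(d,d-1)$ with $\mathbb{RP}^{d-1}$ by sending a hyperplane $V$ to its unit normal $n_V$, up to sign. For $P_V$ the orthogonal projection onto $V$, $\ker P_V = \mathrm{span}(n_V)$. Hence for any subspace $W \leq \rd$ we have $\dim P_V(W) = \dim W - 1$ if $n_V \in W$, and $\dim P_V(W) = \dim W$ otherwise.

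The key step is the following claim. If $V_1,\dots,V_d \in G(d,d-1)$ have linearly independent normals $n_{V_1},\dots,n_{V_d}$, then $\{P_{V_i}\}_{i=1}^d$ is BL-feasible with weights $c_i = 1/(d-1)$. Scaling \eqref{scaling} holds since $d(d-1)/(d-1)=d$. For \eqref{dimension}, let $W$ have dimension $k$ and let $j$ be the number of normals $n_{V_i}$ lying in $W$; then $j \leq k$ by linear independence. We get
\[
\sum_i c_i \dim P_{V_i}(W) = \frac{dk - j}{d-1} \geq \frac{dk-k}{d-1} = k = \dim W .
\]
Alternatively, a linear change of basis sending the normals to the standard basis turns this into the Loomis--Whitney datum. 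Either way the check is routine.

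Now suppose the exceptional set \eqref{eset2} contained $d$ hyperplanes whose normals are linearly independent. Each of them satisfies $\ubd P_{V_i}(X) < \frac{d-1}{d}\ubd X$, which contradicts Corollary \ref{exceptional} applied to the BL-feasible collection $\{P_{V_i}\}$, where $d_i=d-1$. Therefore the normals of all members of \eqref{eset2} span a proper subspace of $\rd$. Hence they lie in some fixed hyperplane $H \leq \rd$, and the exceptional set is contained in
\[
\{V \in G(d,d-1) : n_V \in H\} \cong \mathbb{P}(H) \cong \mathbb{RP}^{d-2}.
\]
This is a smooth embedded codimension-one submanifold of $G(d,d-1)\cong \mathbb{RP}^{d-1}$ (a projective hyperplane). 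Its Hausdorff dimension is $d-2$, which gives the final assertion; the hypothesis $d\geq 3$ just ensures that this is a genuine positive-dimensional submanifold.

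I do not expect a serious obstacle. The only point needing care is to confirm that linear independence of the normals is the right feasibility criterion. The direct computation above suffices, so the full polytope description of \cite{val} is not needed, only that the spanning case is feasible. The second point of care is to identify the set of hyperplanes with normals in $H$ as a codimension-one submanifold, which is standard projective geometry.
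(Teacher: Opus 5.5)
Your proof is correct and follows the paper's route. Both reduce to Corollary \ref{exceptional} by showing that any $d$ hyperplanes whose normals (the kernels of the $P_V$) span $\mathbb{R}^d$ form a BL-feasible collection, so the normals of all exceptional hyperplanes lie in one fixed hyperplane $H$. The only differences are that you check feasibility directly with weights $1/(d-1)$ (equivalently, Loomis--Whitney after a change of basis) rather than citing Valdimarsson's Theorem 1.3, and you identify the containing submanifold explicitly as the projective hyperplane $\{V : n_V \in H\}$, which the paper leaves implicit.
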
 

\begin{proof}
It follows from \cite[Theorem 1.3]{val} that any collection of $V$ for which $\rd$ decomposes as a direct sum of a subset of the kernels $\textup{ker}(P_V)$ is BL-feasible. Any collection   of $V$ from $G(d,d-1)$ which is not contained in a co-dimension 1 submanifold therefore necessarily contains a BL-feasible subset and therefore if the exceptional set \eqref{eset2} is not contained in a co-dimension 1 submanifold, then we contradict Corollary \ref{exceptional}.
\end{proof}

\subsection{Building up, not down}

So far we have been adopting the viewpoint that we are given a set $X \subseteq \rd$ and want to make deductions about the dimensions of $X$ based on its projections.  Analogous to the dimension theory of Cartesian products, we may view the problem in reverse.  That is, we are given a collection of sets $X_i \subseteq \mathbb{R}^{d_i}$ and build a set $X \subseteq \rd$ from this collection.  More precisely, given $m,d \geq 1$ and  a collection of linear surjections $P_i: \mathbb{R}^d \to \mathbb{R}^{d_i}$ ($i = 1, \dots, m$) and a collection of sets $X_i \subseteq \mathbb{R}^{d_i}$ define
\[
\bigotimes_{i=1}^m X_i : = \{ x \in \mathbb{R}^d : \forall i=1, \dots, m, \  P_i(x) \in X_i\} = \bigcap_{i=1}^m P_i^{-1}(X_i).
\]
That is, $\bigotimes_{i=1}^m X_i $ is the largest set by inclusion with prescribed images $P_i(X)\subseteq X_i$.  Note that it may be empty if the sets $X_i$ are `incompatible'.  Moreover, if $\sum d_i = d$ and $\cap_{i=1}^m \textup{ker}(P_i) = \{0\}$, then
\[
\bigotimes_{i=1}^m X_i = X_1 \times \cdots \times X_m
\]
is the usual Cartesian product. The following result is immediate from Theorem \ref{main}.

\begin{cor} \label{constrainedproduct}
Let $\{P_i\}$ and  $\bigotimes_{i=1}^m X_i $ be defined as above.  Then, for any choice of weights $\{c_i\}$ such that $P_i, c_i$ satisfy \eqref{scaling} and \eqref{dimension},
\[
\ubd \bigotimes_{i=1}^m X_i  \leq \sum_{i=1}^m c_i \, \ubd X_i.
\]
\end{cor}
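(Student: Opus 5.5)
The plan is to apply Theorem \ref{main} directly to the set $X := \bigotimes_{i=1}^m X_i$ and then use monotonicity of upper box dimension. I will first handle the degenerate cases. If $X$ is empty, there is nothing to prove: either one adopts the convention $\ubd \emptyset = -\infty$, or one regards the statement as vacuous. If some $X_i$ is unbounded, then $\ubd X_i$ should be read as $\infty$ and the inequality is trivial. So I assume throughout that $X$ is non-empty and every $X_i$ is bounded, which is the setting in which upper box dimension was defined.

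The first step is to check that $X$ is bounded, since Theorem \ref{main} requires this. The dimension condition \eqref{dimension} gives $\cap_{i=1}^m \textup{ker}(P_i) = \{0\}$, as noted after \eqref{dimension}. Hence the linear map $x \mapsto (P_1(x), \dots, P_m(x))$ from $\rd$ into $\mathbb{R}^{d_1} \times \cdots \times \mathbb{R}^{d_m}$ is injective, and therefore bounded below: $|x| \lesssim \sum_i |P_i(x)|$. Every $x \in X$ satisfies $P_i(x) \in X_i$ for each $i$, and each $X_i$ is bounded, so $X$ is bounded.

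The second step is to apply Theorem \ref{main} to $X$, which gives $\ubd X \leq \sum_i c_i \ubd P_i(X)$. By construction $P_i(X) \subseteq X_i$ for every $i$, and upper box dimension is monotone under inclusion, so $\ubd P_i(X) \leq \ubd X_i$. Since each $c_i > 0$, we conclude $\ubd X \leq \sum_i c_i \ubd X_i$.

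The only step that needs any care is the boundedness of $X$, and it follows immediately from the kernel condition. The rest is a direct application of Theorem \ref{main}.
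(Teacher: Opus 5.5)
Your proof is correct and is the argument the paper intends when it calls the corollary ``immediate from Theorem \ref{main}'': apply the theorem to $\bigotimes_{i=1}^m X_i$, then use $P_j\big(\bigotimes_{i=1}^m X_i\big) \subseteq X_j$ and monotonicity of $\ubd$. You also check that $\bigotimes_{i=1}^m X_i$ is bounded, using $\cap_{i=1}^m \textup{ker}(P_i)=\{0\}$; the paper leaves this implicit, and your argument for it is sound.
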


Since we are now building up, not down, if we let the `pieces' $X_i$ all be the same set, then the `self-product' will admit a Brascamp--Lieb inequality for lower box dimension.  There is no such result for Hausdorff dimension, as it is known to fail even in the Cartesian product case. 

\begin{cor} \label{self-prod}
Let $X \subseteq \mathbb{R}^l$ for some $l <d$ and, for $i=1, \dots, m$, let $P_i:\rd \to  \mathbb{R}^l$ be a family of linear surjections.  Then, for any choice of weights $\{c_i\}$ such that $P_i, c_i$ satisfy \eqref{scaling} and \eqref{dimension},
\[
\lbd \bigotimes_{i=1}^m X  \leq \sum_{i=1}^m c_i \, \lbd X .
\]
\end{cor}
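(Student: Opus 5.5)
Let $Y = \bigotimes_{i=1}^m X = \bigcap_{i=1}^m P_i^{-1}(X)$. I would rerun the proof of Theorem \ref{main} for $Y$ in place of $X$, but feed the Brascamp--Lieb inequality with the neighbourhoods of the single set $X$ rather than of the individual projections. Then every factor is controlled at the same scale $\eps$, so one sequence of scales serves all factors simultaneously. That simultaneity is exactly what fails for lower box dimension in general, and it is why the Proposition after Theorem \ref{main} only gives a mixed upper/lower bound.

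We may assume $Y$ is non-empty and bounded; the latter follows if $X$ is bounded, since $\cap_i \ker P_i=\{0\}$. We may also assume $\lbd X<\infty$, i.e.\ $X$ is bounded, otherwise there is nothing to prove. The steps are as follows.

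\begin{enumerate}
\item Fix $s>\lbd X$. Choose a sequence $\eps_n\searrow 0$ with $\mathcal{L}^l(X_{\eps_n})\lesssim \eps_n^{l-s}$.

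\item Let $L=\max_i\|P_i\|$. If $y\in Y_{\eps}$, pick $y'\in Y$ with $|y-y'|<\eps$. Then $P_i(y')\in X$ and $|P_i(y)-P_i(y')|<L\eps$, so $P_i(y)\in X_{L\eps}$ for every $i$.

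\item Hence, applying \eqref{bli} with $f_i=\mathbf 1_{X_{L\eps}}$ for every $i$, exactly as in the proof of Theorem \ref{main},
\[
\mathcal{L}^d(Y_\eps)\le C\prod_{i=1}^m \mathcal{L}^l(X_{L\eps})^{c_i}.
\]

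\item To obtain a sequence along which $Y$ has small neighbourhoods, take $\eps=\eps_n/L$. Then $\mathcal{L}^l(X_{L\eps})=\mathcal{L}^l(X_{\eps_n})\lesssim \eps_n^{l-s}$ for all $i$ at once. By \eqref{scaling}, which reads $l\sum_i c_i=d$ here, we get
\[
\mathcal{L}^d(Y_{\eps_n/L})\lesssim \eps_n^{(l-s)\sum_i c_i}=\eps_n^{d-s\sum_i c_i}\approx (\eps_n/L)^{d-s\sum_i c_i}.
\]

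\item Since $\eps_n/L\searrow 0$, this gives $\lbd Y\le s\sum_i c_i$. Letting $s\searrow\lbd X$ finishes the proof.
\end{enumerate}

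The only point needing care is the rescaling by $L$ in steps 2 and 4. With $L\geq 1$ one could instead take $f_i$ to be the indicator of $X_{\eps}$ and apply the argument to $Y_{\eps/L}$. Either way it is harmless, because the constant $L$ depends only on the datum. The real content is simply that all the $f_i$ are built from one set, so a single witnessing sequence for $\lbd X$ works in every factor.
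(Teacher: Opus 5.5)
Your proposal is correct and is essentially the paper's own argument. You rerun the proof of Theorem~\ref{main} with every $f_i$ the indicator of a neighbourhood of the single set $X$ at a common scale, so one sequence of scales witnessing $\lbd X$ controls all $m$ factors at once. Your explicit rescaling by $L=\max_i\|P_i\|$ also supplies a detail the paper leaves implicit, since its proof of Theorem~\ref{main} tacitly uses $P_i(X_\eps)\subseteq P_i(X)_\eps$, which requires $\|P_i\|\le 1$.
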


\begin{proof}
Inspecting the proof of Theorem \ref{main} we are able to obtain this improved estimate by bounding the volume of the $\eps$-neighbourhood of $\bigotimes_{i=1}^m X$ for a sequence of $\eps \searrow 0$ since the $m$ different estimates we need are all referring to the same set at the same scale.  In particular, we choose the sequence of $\eps$ to align with where $X$ is small, thus picking up the lower box dimension.
\end{proof}

\subsection{Constrained sumsets}

Given $E \subseteq [0,1]$, the sumset 
\[
E+E=\{x+y: x,y \in E\}
\]
is an important object in additive combinatorics.  The sumset $E+E$ is a Lipschitz image of the product set $E \times E$ and therefore it is straightforward from the product formula that
\[
\ubd  (E+E) \leq \ubd  (E \times E) \leq 2 \ubd  E.
\]
Here we consider a constrained sumset problem. Let $X \subseteq [0,1]^2$ and define
\[
\Delta(X) = \{x+y+z : (x,y), (y,z), (x, z) \in X\} \subseteq \mathbb{R}
\]
and
\[
\tilde X = \{(x,y,z) : (x,y), (y,z), (x, z) \in X\} \subseteq \mathbb{R}^3.
\]
Then $\Delta(X)$ is a Lipschitz image of $\tilde X$ and so
\[
\ubd  \Delta(X) \leq \ubd  \tilde X \leq \frac{3 \ubd  X}{2}
\]
by Theorem \ref{main}.  More generally, we have the following result.  Further variants are also possible, but we leave these to the reader.

\begin{cor} \label{sumset}
Let $d >k \geq 1$ be integers and $X \subseteq \mathbb{R}^k$ and define
\[
\Delta(X) = \left\{ \sum_{i=1}^d z_i : P(z_1, \dots, z_d) \in X \text{ for all $k$-dimensional coordinate projections $P$}\right\}.
\]
Then
\[
\ubd \Delta(X) \leq \frac{d}{k} \, \ubd X.
\]
In particular, if $\ubd X < k/d$ then $\Delta(X)$ has zero Lebesgue measure. 
\end{cor}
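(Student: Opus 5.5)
The plan is to realise $\Delta(X)$ as a Lipschitz image of a constrained product as in Corollary \ref{constrainedproduct}, exactly as in the triangle example $d=3$, $k=2$ above. One reading issue comes first. The coordinates $z_1,\dots,z_d$ are scalars, and $P(z_1,\dots,z_d)$ picks out $k$ of them, so it lies in $\mathbb{R}^k$. Hence $\sum_i z_i$ is a real number and $\Delta(X) \subseteq \mathbb{R}$.

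Define
\[
\tilde X = \{ z \in \mathbb{R}^d : P(z) \in X \text{ for all $k$-dimensional coordinate projections $P$}\} = \bigotimes_{i=1}^{\binom{d}{k}} X,
\]
where the $P_i$ range over the $\binom{d}{k}$ principal coordinate projections onto $k$-planes. Then $\Delta(X) = S(\tilde X)$, where $S(z) = \sum_i z_i$ is linear and hence Lipschitz. Upper box dimension does not increase under Lipschitz maps, so $\ubd \Delta(X) \leq \ubd \tilde X$.

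To bound $\ubd \tilde X$, I would apply Corollary \ref{constrainedproduct} with the datum from Corollary \ref{simplecor}: $d_i = k$ and $c_i = d/(\binom{d}{k}k)$. The scaling condition \eqref{scaling} holds by this choice of $c_i$. For the dimension condition \eqref{dimension}, I would not recheck it by hand but rely on the claim already made in the proof of Corollary \ref{simplecor} that this is an admissible Brascamp--Lieb datum. If justification were wanted, one route is to average Hölder's inequality over the coordinate projections, which gives the finite-constant form of Loomis--Whitney in this uniform setting. Corollary \ref{constrainedproduct} then gives
\[
\ubd \tilde X \leq \sum_{i=1}^{\binom{d}{k}} \frac{d}{\binom{d}{k}k}\, \ubd X = \frac{d}{k}\,\ubd X.
\]
There is one boundedness point to handle. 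If $X$ is bounded, then $\tilde X$ is bounded, since every coordinate appears in some $k$-subset. In the unbounded case $\ubd X$ is infinite or the statement is vacuous, so I would assume $X$ is bounded.

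For the final claim, if $\ubd X < k/d$ then $\ubd \Delta(X) < 1$. A subset of $\mathbb{R}$ with upper box dimension less than $1$ has Lebesgue measure zero, because $\mathcal{L}^1(\Delta(X)) \leq \mathcal{L}^1(\Delta(X)_\eps) \lesssim \eps^{1-s} \to 0$.

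The only real obstacle is the admissibility of the coordinate $k$-plane datum, and that is already used in Corollary \ref{simplecor}. Everything else is bookkeeping.
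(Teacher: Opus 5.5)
Your proposal is correct and is essentially the paper's proof: you realise $\Delta(X)$ as the image of $\bigotimes_{i=1}^{\binom{d}{k}} X$ under the linear map $z \mapsto \sum_i z_i$, and then apply Corollary \ref{constrainedproduct} with the uniform coordinate $k$-plane weights from Corollary \ref{simplecor}. Your remarks on boundedness of $\tilde X$, the unbounded case, and the Lebesgue-null conclusion correctly fill in details that the paper leaves implicit.
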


\begin{proof}
The constrained sumset $\Delta(X)$ is a Lipschitz image of the set
\[
\bigotimes_{i=1}^{\binom{d}{k}} X = \left\{ z \in \rd :  \forall i=1, \dots, \binom{d}{k}, \  P_i(z) \in X\right\}
\] 
under the map $(z_1, \dots, z_d ) \mapsto z_1 + \cdots + z_d$ where the $P_i$ are the $\binom{d}{k}$ many principal  $k$-dimensional coordinate projections. Then, applying Corollary \ref{constrainedproduct},
\[
\ubd \Delta(X) \leq \ubd \bigotimes_{i=1}^{\binom{d}{k}} X  \leq \frac{d}{k} \,  \ubd X,
\]
as required.
\end{proof}

The estimate from Corollary \ref{sumset} is sharp.  For example, let $X = E \times \cdots \times E$ be the $k$-fold Cartesian product of a set $E \subseteq [0,1]$ with upper box  and Fourier dimension (denote by $\fd$) equal to $s \in [0,1/d]$.  Such sets $E$   can be constructed, for example, by taking the image of a self-similar set in $[0,1]$ of (upper box and Hausdorff) dimension $s/2$ under $1$-dimensional Brownian motion $B: [0,\infty) \to \mathbb{R}$. The image is almost surely a Salem set of (Fourier) dimension $s$ by \cite[Theorem 1, Chapter 17]{kahane} and, moreover, the upper box dimension of the image cannot exceed $s$ since $1$-dimensional Brownian motion  is almost surely $\alpha$-H\"older for all $0<\alpha<1/2$. Then
\[
\ubd X  = ks
\]
and it is straightforward to see that the product structure of $X$ ensures that
\[
\Delta(X) = E+ \cdots + E
\]
is the $d$-fold sumset of $E$.  Then
\[
\ubd \Delta(X) \geq \hd (E+ \cdots + E) \geq \fd (E+ \cdots + E)  \geq d \fd E =  d s
\]
and 
Corollary \ref{sumset} gives
\[
\ubd \Delta(X) \leq \frac{d}{k} \, \ubd X  = ds.
\]
In the above we used that the Fourier dimension is super-additive with respect to taking sumsets.  This is an easy consequence of the convolution formula for the Fourier transform, see \cite{mattila}.  In particular, see the proof of  \cite[Proposition 3.14]{mattila}.

\subsection{Nonlinear examples}

Here we collect some simple consequences of the nonlinear Brascamp--Lieb inequality for upper box dimension, Theorem \ref{mainnl}.  Rather than try to make very general statements we focus on simple examples.
\\ \\
\emph{Example 1:~dimension bounds from partial directional information}.  Let $X \subseteq (0,\infty)^3$ be compact and define three nonlinear $C^2$ submersions   $T_i:(0,\infty)^3 \mapsto \mathbb{R}^2$ by
\[
T_1(z) = |z|(z_2,z_3), \qquad T_2(z) = |z|(z_1,z_3), \qquad T_3(z) = |z|(z_1,z_2).
\]
By direct calculation of the Jacobians we find that these submersions are all of rank 2 at all points in their domain.  The maps $T_i$ record the  modulus and also some partial directional information about the input $z$. It follows from Theorem \ref{mainnl} that
\[
\ubd X \leq \frac{1}{2} \sum_{i=1}^3 \ubd T_i(X).
\]
\emph{Example 2:~nonlinear constrained sumsets}.  Let $X \subseteq (0,\infty)^2$ be compact and define
\[
\Delta(X) = \{x+y+z : (xy,xz), (xy,yz), (xz, yz) \in X\} \subseteq \mathbb{R}
\]
and
\[
\tilde X = \{(x,y,z) : (xy,xz), (xy,yz), (xz, yz) \in X\} \subseteq \mathbb{R}^3.
\]
Further, define three nonlinear $C^2$ submersions  $T_i: (0,\infty)^3 \mapsto \mathbb{R}^2$ by
\[
T_1(x,y,z) = (xy,xz), \qquad T_2(x,y,z) = (xy, yz), \qquad T_3(x,y,z) = (xz, yz).
\]
Again, these submersions are easily seen to be rank 2 everywhere in their domain. Then each $T_i$ maps $\tilde X$ into $X$.  It follows from Theorem \ref{mainnl} that
\[
\ubd \tilde X \leq \frac{1}{2} \sum_{i=1}^3 \ubd T_i( \tilde X) \leq  \frac{3}{2} \  \ubd X.
\]
But  $\Delta(X)$ is a Lipschitz image of $\tilde X$ and so
\[
\ubd  \Delta(X) \leq \ubd  \tilde X \leq \frac{3}{2} \ \ubd X.
\]
 \emph{Example 3:~there cannot be many  small radial projections}.   Given $z \in \rd$, the \emph{radial projection} associated to the `pin' $z$ is the map $R_z : \rd\setminus\{z\} \mapsto S^{d-1}$ defined by
 \[
R_z(x) = \frac{x-z}{|x-z|}.
 \]
 There is interest in understanding how dimension behaves under radial projections, for example, relating $\ubd R_z(X)$ to $\ubd X$ for generic pins $z$.   Radial projections are nonlinear and we can use our nonlinear Brascamp--Lieb inequality to show that in generic situations there cannot be too many small radial projections.

 \begin{cor} \label{exceptionalradial}
Let $X \subseteq \rd$ be a non-empty compact set and let $Z \subseteq \rd$ be a collection of $d$ many points in general position and assume that $X$ does not intersect any affine hyperplanes spanned by proper subsets of $Z$.  Then 
\[
\ubd X \leq \frac{1}{d-1} \sum_{z \in Z} \ubd R_z(X)
\]
and, in particular, there must exist $z \in Z$ such that 
\[
\ubd R_{z}(X) \geq \frac{d-1}{d} \, \ubd X.
\]
\end{cor}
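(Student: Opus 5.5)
The plan is to apply the nonlinear inequality, Theorem \ref{mainnl}, with $m=d$ maps, one radial projection $R_z$ for each pin $z \in Z$, and with all weights equal to $c_z = 1/(d-1)$. Since $S^{d-1}$ is $(d-1)$-dimensional, the scaling condition \eqref{scaling2} reads $\sum_{z} c_z (d-1) = d$, which holds. The second assertion then follows exactly as in Corollary \ref{exceptional}. If $\ubd R_z(X) < \frac{d-1}{d}\ubd X$ for every $z\in Z$, the first inequality would give $\ubd X < \frac{1}{d-1}\cdot d \cdot \frac{d-1}{d}\ubd X = \ubd X$, which is a contradiction.

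\emph{Step 1: reduce to maps into $\mathbb{R}^{d-1}$.} The hypotheses imply $X \cap Z = \emptyset$, since a point of $Z$ lies in affine hyperplanes spanned by proper subsets of $Z$. Hence each $R_z$ is smooth (in particular $C^2$) on a neighbourhood of the compact set $X$, and it is a submersion onto $S^{d-1}$. To obtain maps into $\mathbb{R}^{d-1}$, I will cover $S^{d-1}$ by finitely many charts $\varphi_\alpha$ that are smooth and bi-Lipschitz on slightly shrunk domains.
\begin{itemize}
\item Using compactness, I split $X$ into finitely many compact pieces $X^{(1)},\dots,X^{(N)}$ such that, for each piece and each $z$, the image $R_z(X^{(j)})$ lies inside a single chart.
\item On each piece, set $T_z = \varphi_{\alpha(z,j)} \circ R_z$. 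These are $C^2$ submersions into $\mathbb{R}^{d-1}$ on a neighbourhood of $X^{(j)}$.
\item Because the charts are bi-Lipschitz, $\ubd T_z(X^{(j)}) = \ubd R_z(X^{(j)}) \leq \ubd R_z(X)$.
\item Finite stability of upper box dimension gives $\ubd X = \max_j \ubd X^{(j)}$.
\end{itemize}
It therefore suffices to prove the inequality for each piece.

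\emph{Step 2: verify the dimension condition \eqref{dimension2} at every $x_0 \in X$.} This is the heart of the argument and the step I expect to be the main obstacle. The derivative of $R_z$ at $x_0$ is projection onto the tangent plane orthogonal to $x_0 - z$, so its kernel is the line $\ell_z = \mathrm{span}(x_0 - z)$. Composing with a chart does not change the kernel. The $d$ vectors $x_0 - z$, $z \in Z$, are linearly independent exactly when $x_0, z_1, \dots, z_d$ are affinely independent, that is, when $x_0$ is off the affine hyperplane spanned by $Z$. This is what the general position and avoidance hypotheses are meant to guarantee. (One should double-check that hyperplane is meant to be excluded; if only proper subsets are excluded, one needs in addition that $x_0 \notin \mathrm{aff}(Z)$.) Granting independence, the lines $\ell_z$ give a direct sum decomposition $\rd = \bigoplus_{z} \ell_z$. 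For any subspace $V$ we have $\dim P^{x_0}_z(V) = \dim V - \dim(V\cap \ell_z)$, so
\[
\sum_z \frac{1}{d-1}\dim P^{x_0}_z(V) = \frac{1}{d-1}\Big(d\dim V - \sum_z \dim (V\cap\ell_z)\Big) \geq \dim V,
\]
where the inequality uses $\sum_z \dim(V\cap \ell_z) \leq \dim V$. That last bound holds because the $\ell_z$ are independent, so the intersections $V \cap \ell_z$ form a direct sum inside $V$. Equivalently, in the basis $\{x_0-z\}$ the datum is the Loomis--Whitney datum up to linear changes of variables.

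With Steps 1 and 2 in place, Theorem \ref{mainnl} applies on each piece. This gives $\ubd X^{(j)} \leq \frac{1}{d-1}\sum_{z} \ubd R_z(X)$ for every $j$, and taking the maximum over $j$ completes the proof.
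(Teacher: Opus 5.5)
Your proposal is correct and follows the paper's proof: finitely many small pieces, charts making each $R_z$ a $C^2$ submersion into $\mathbb{R}^{d-1}$, weights $1/(d-1)$, Theorem \ref{mainnl}, then finite stability. Your reduction of \eqref{dimension2} to $\sum_z \dim(V\cap\ell_z)\leq\dim V$ is more explicit than the paper, which simply asserts that the hypotheses give \eqref{dimension2}.

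Your parenthetical worry is justified, and the paper's proof has the same blind spot. No proper subset of $d$ points spans a hyperplane, and \eqref{dimension2} fails at every $x_0\in\mathrm{aff}(Z)$ for any admissible weights. The statement itself also fails there. A small $(d-1)$-dimensional ball in $\mathrm{aff}(Z)$ around the centroid of $Z$ avoids every flat spanned by a proper subset, yet has $\ubd X=d-1$ while $\ubd R_z(X)=d-2$ for all $z$, and $d(d-2)/(d-1)<d-1$. So the hypothesis must be read as $X\cap\mathrm{aff}(Z)=\emptyset$, which is exactly what your argument uses.
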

\begin{proof}
First consider a partition of $X$ into finitely many pieces $X_j$ each with very small diameter, say $|X_j| \leq 1/10$.  It suffices to prove the claim for each piece $X_j$ and appeal to finite stability of upper box dimension.  For each $z \in Z$, consider $R_z(X_j) \subseteq S^{d-1}$.  Since $|R_z(X_j)| \leq 1/10$, there is a smooth global chart defined in a neighbourhood of $R_z(X_j)$ and therefore we can find   a $C^2$ diffeomorphism  $T_z$ defined on a neighbourhood of $R_z(X_j)$ which maps into $\mathbb{R}^{d-1}$ and the map $T_z \circ R_z$ is now a  $C^2$ submersion of rank $(d-1)$  defined in a neighbourhood of $X_j$.  Looking to apply Theorem \ref{mainnl}, we may choose $c_i = \frac{1}{d-1}$ for all $i = 1, \dots, |Z| = d$.  In particular,   \eqref{scaling2} is satisfied.  Moreover,  the fact that the points in $Z$ are in general position and that $X$ does not intersect hyperplanes spanned by proper subsets of $Z$ ensures    \eqref{dimension2} is  satisfied for all $x_0 \in X$.  Then Theorem \ref{mainnl} implies that 
\[
\ubd X_j \leq \frac{1}{d-1} \sum_{z \in Z} \ubd (T_z \circ R_z)(X_j ) = \frac{1}{d-1} \sum_{z \in Z} \ubd R_z(X_j )
\]
and the first claim follows. Finally, suppose that for all $z \in Z$  
\[
\ubd R_{z}(X) < \frac{d-1}{d} \, \ubd X.
\]
But then
\[
\ubd X \leq \frac{1}{d-1} \sum_{z \in Z} \ubd R_z(X) < \ubd X , 
\]
a contradiction. 
\end{proof}

The assumptions in Corollary \ref{exceptionalradial} cannot be removed in general. For example, consider a collection $Z \subseteq \rd $ of $d$ many pins in general position and let $X$ be a unit line segment which is collinear with (but not containing) two of the points from $Z$.  Then, for these two pins,
\[
\ubd R_z(X) = 0
\]
and for all the others 
\[
\ubd R_z(X) = 1.
\]
But then the upper bound from Corollary  \ref{exceptionalradial} would give 
\[
\frac{d-2}{d-1}<1 = \ubd X 
\]
and so the conclusion fails. 

\section*{Acknowledgements}

I thank Jonathan Bennett for his interesting talk about the Brascamp--Lieb inequality during the 2024 Madison Lectures in Harmonic Analysis. His talk, and the discussions thereafter on the Madison Promenade,  inspired this work. I am also grateful to Kenneth Falconer for pointing out the reference \cite{jar} and to Jonathan Bennett for more recent discussions.

\end{document}